    \def\atSign{@@}
    \def\mathbb{\Bbb}
    \def\mathfrak{\frak}
    \def\mathbf{\bold}
      \def\boldsymbol#1{{\bold #1}}
    \def\mathbit{\boldsymbol}
    \newenvironment{proof}{%
         \@ifnextchar[{%
                       \expandafter\let\expandafter\end@proof
                         \csname endpf*\endcsname
                         \my@proof
                      }{\let\end@proof\endpf\pf}%
        }{\end@proof}
    \def\my@proof[#1]{\@nameuse{pf*}{#1}}
    \def\xrightarrow[#1]#2{@>{#2}>{#1}>}
    \def\xleftarrow[#1]#2{@<{#2}<{#1}<}
    \def\providecommand#1{\def#1}
    \def\emph#1{{\em #1}}
    \def\textbf#1{{\bf #1}}
    \def\mathring{\overset{\,\,{}_\circ}}% For slanted letters only, sub too high
        \DeclareMathAccent{\mathring}{\mathalpha}{operators}{"17}
      \long\def\FAKEendPROOF{\endtrivlist}
	  \def\endproof{\qed\endtrivlist}
        \DeclareMathAlphabet{\mathbit}{OML}{cmm}{b}{it}
      \def\atSign{@}
      \def\Sb#1\endSb{_{\substack{#1}}}
      \def\Sp#1\endSp{^{\substack{#1}}}
                \def\mathcal{\cal}
                \def\pcyr{%
                        \def\default@family{UWCyr}%
                        \let\oldSl@\sl
                        \def\sl{\def\default@shape{it}\oldSl@}%
                        \cyracc
                        \language\Russian\family{UWCyr}\selectfont
                }
                \DeclareFontFamily{OT2}{cmr}{\hyphenchar\font45 }
                \DeclareFontShape{OT2}{cmr}{m}{n}{%
                     <5><6><7><8><9><10>gen*wncyr %
                     <10.95><12><14.4><17.28><20.74><24.88> wncyr10 %
                }{}
                \DeclareFontShape{OT2}{cmr}{m}{it}{%
                     <5><6><7><8><9><10> gen * wncyi%
                     <10.95><12><14.4><17.28><20.74><24.88> wncyi10%
                }{}
                \DeclareFontShape{OT2}{cmr}{bx}{n}{%
                     <5><6><7><8><9><10> gen * wncyb%
                     <10.95><12><14.4><17.28><20.74><24.88> wncyb10%
                }{}
                \DeclareFontShape{OT2}{cmr}{m}{sl}{%
                     <-> ssub * cmr/m/it%
                }{}
                \DeclareFontShape{OT2}{cmr}{m}{sc}{%
                     <5><6><7><8><9><10>%
                     <10.95><12><14.4><17.28><20.74><24.88> wncysc10%
                }{}
                \DeclareFontFamily{OT2}{cmss}{\hyphenchar\font45 }
                \DeclareFontShape{OT2}{cmss}{m}{n}{%
                     <8><9><10> gen * wncyss%
                     <10.95><12><14.4><17.28><20.74><24.88> wncyss10%
                }{}
                \def\cyrencodingdefault{OT2}
                \def\pcyr{%
                        \cyracc
                        \let\encodingdefault\cyrencodingdefault
                        \language\Russian\fontencoding{OT2}\selectfont
                }
        \def\theorembodyfont#1{\relax}
          \let\@@th@plain\th@plain
          \def\th@plain{ \@@th@plain \slshape }
        \let\normalshape\relax
     \def\cprime{$'$}
  \def\@sect@my#1#2#3#4#5#6[#7]#8{%
\ifnum #2>\c@secnumdepth
   \let\@svsec\@empty
 \else
   \refstepcounter{#1}%
\edef\@svsec{\ifnum#2<\@m
             \@ifundefined{#1name}{}{\csname #1name\endcsname\ }\fi
\noexpand\rom{\csname the#1\endcsname.}\enspace}\fi
 \@tempskipa #5\relax
 \ifdim \@tempskipa>\z@ % then this is not a run-in section heading
   \begingroup #6\relax
   \@hangfrom{\hskip #3\relax\@svsec}{\interlinepenalty\@M #8\par}%
   \endgroup
   \if@article\else\csname #1mark\endcsname{%
        \ifnum \c@secnumdepth >#2\relax\csname the#1\endcsname. \fi#7}\fi
\ifnum#2>\@m \else
       \let\@tempf\\ \def\\{\protect\\}\addcontentsline{toc}{#1}%
{\ifnum #2>\c@secnumdepth \else
             \protect\numberline{%
               \ifnum#2<\@m
               \@ifundefined{#1name}{}{\csname #1name\endcsname\ }\fi
               \csname the#1\endcsname.}\fi
           #8}\let\\\@tempf
     \fi
 \else
  \def\@svsechd{#6\hskip #3\@svsec
    \@ifnotempty{#8}{\ignorespaces#8\unskip
       \ifnum\spacefactor<1001.\fi}%
        \ifnum#2>\@m \else
          \let\@tempf\\ \def\\{\protect\\}\addcontentsline{toc}{#1}%
            {\ifnum #2>\c@secnumdepth \else
              \protect\numberline{%
                \ifnum#2<\@m
                \@ifundefined{#1name}{}{\csname #1name\endcsname\ }\fi
                \csname the#1\endcsname.}\fi
             #8}\let\\\@tempf\fi}%
 \fi
\@xsect{#5}}
  \let\@sect\@sect@my             % Cannot just comment the above
  \def\th@remark@my{\theorempreskipamount6\p@\@plus6\p@
    \theorempostskipamount\theorempreskipamount
    \def\theorem@headerfont{\it}\normalshape}
    \let\th@remark\th@remark@my
    \let\o@@remark\th@remark
      \def\th@remark{\o@@remark
	\ifdim\theorempostskipamount < 2pt\relax
	  \theorempostskipamount\theorempreskipamount
	  %\ifdim\theorempostskipamount < 4pt\else
	     \multiply\theorempostskipamount\tw@
	     \divide\theorempostskipamount\thr@@
	  %\fi
	\fi
      }
\let\myLabel\@gobble
\def\labelsONmargin{\@mparswitchfalse\def\myLabel##1{\@bsphack\marginpar
                                  {\normalshape\tiny\rm Label ##1}\@esphack}}
  \def\url#1{{\tt #1}}%
\def\cyracc{\def\u##1{%\if \i##1\accent"24 i%
                \if \i##1\char"1A%
                \else \if I##1\char"12%
                \else \accent"24 ##1\fi\fi }%
\def\"##1{\if e##1{\char"1B}%
                \else \if E##1{\char"13}%
                \else \accent"7F ##1\fi\fi }%
\def\9##1{\if##1z\char"19 
\else\if##1Z\char"11 
\else\if##1E\char"03 
\else\if##1e\char"0B 
\else\if##1u\char"18 
\else\if##1U\char"10 
\else\if##1A\char"17 
\else\if##1a\char"1F 
\else\if##1p\char"7E 
\else\if##1P\char"5E 
\else\if##1Q\char"5F 
\else\if##1q\char"7F 
\else\if##1i\char"1A 
\else\if##1I\char"12 
\else\if##1N\char"7D 
\fi
\fi
\fi
\fi
\fi
\fi
\fi
\fi
\fi
\fi
\fi
\fi
\fi
\fi
\fi
}%
\def\cydot{{\kern0pt}}}%
\def\cydot{$\cdot$}
        \def\Russian{0\relax
    \message{Don't know the hyphenation rules for Russian^^J
                        Please do INITeX with `input  russhyph' in the 
                        command line}%
                \gdef\Russian{0\relax}%
        }
  \def\@putname#1#2#3#4{\def\@@ref{#3}\let\old@bf\bf
        \def\bf##1{\old@bf\if?\noexpand##1?{#4}\else##1\fi}%
	#1{#2}%
        \let\bf\old@bf}
  \def\@putname#1#2#3#4{\def\@@ref{#3}\let\old@bf\bf	% for 209???
	\let\old@reset@font\reset@font			% for 2e
        \def\bf##1{\old@bf\if?\noexpand##1?{#4}\else##1\fi}%
	\def\reset@font##1##2{\old@reset@font##1\if?\noexpand##2?{#4}\else##2\fi}#1{#2}%
        \let\bf\old@bf\let\reset@font\old@reset@font}
\let\my@ref=\ref
\def\ref#1{\@putname\my@ref{#1}{#1}{\tiny\rm\@@ref}}
\let\my@pageref=\pageref
\def\pageref#1{\@putname\my@pageref{#1}{#1}{\tiny\rm\@@ref}}
\let\my@cite=\cite
\def\cite#1{\@putname\my@cite{#1}{\@citeb}{\tiny\rm\@@ref}}
\theoremstyle{plain} % for references in unnumbered theorems
\numberwithin{equation}{section}
\theoremstyle{definition}
\newtheorem{definition}{Definition}[section]
\newtheorem{example}[definition]{Example}
\theoremstyle{remark}
\newtheorem{remark}[definition]{Remark} %\renewcommand{\theremark}{}
\theoremstyle{plain} % for future references
\newtheorem{theorem}[definition]{Theorem}
\newtheorem{lemma}[definition]{Lemma}
\newtheorem{corollary}[definition]{Corollary}
\newtheorem{proposition}[definition]{Proposition}
\renewcommand{\dim}{\mathrm{dim}}
\newcommand{\sdim}{\mathrm{sdim}}
\newcommand{\soc}{\mathrm{soc}}
\newcommand{\id}{\mathrm{id}}
\newcommand{\wt}{\mathrm{wt}}
\newcommand{\Hom}{\mathrm{Hom}}
\newcommand{\Ext}{\mathrm{Ext}}
\renewcommand{\Im}{\mathrm{Im}}
\newcommand{\Ker}{\mathrm{Ker}}
\newcommand{\Rep}{\mathrm{Rep}}
\begin{document}
\bibliographystyle{amsplain}
\relax 

\title[Tensor product of the Fock representation with its dual]{ Tensor product of the Fock representation with its dual and the Deligne category}

\author{ Vera Serganova }

\date{ \today }

\address{ Dept. of Mathematics, University of California at Berkeley,
Berkeley, CA 94720 }

\email{serganov\atSign{}math.berkeley.edu}

\dedicatory{To Kolya Reshetikhin for his 60th birthday}
\maketitle
\section{Introduction} Let $\mathbb V:=\mathbb C^{\mathbb Z}$ be a countable dimensional vector space with fixed basis $\{u_i\mid i\in \mathbb Z\}$.
Consider the Lie algebra $\mathfrak{sl}(\infty)$ of all traceless linear operators in $\mathbb C^{\mathbb Z}$ annihilating almost all $u_i$.
Clearly, $\mathfrak{sl}(\infty)$ can be identified with the Lie algebra of traceless infinite matrices with finitely many non-zero entries.
We consider $\mathfrak{sl}(\infty)$ as a Kac-Moody Lie algebra associated with Dynkin diagram $A_\infty$. The Chevalley--Serre generators $e_a,f_a,\ a\in\mathbb Z$ of $\mathfrak{sl}(\infty)$ act on $\mathbb V$ by
$$f_a u_b=\delta_{a,b}u_{b+1},\quad e_a u_b=\delta_{a+1,b}u_{b-1}.$$
The fermionic Fock space $\mathfrak F$ is a simple $\mathfrak{sl}(\infty)$-module with fundamental highest weight $\omega_{-1}$. It has a realization as the
``semi-infinite exterior power'' $\Lambda^{\infty/2}\mathbb C^{\mathbb Z}$ which is the span of all formal expressions $u_{i_1}\wedge u_{i_2}\wedge \dots $
satisfying the conditions $i_j>i_{j+1}$ for all $j\geq 1$ and $i_k=-k$ for sufficiently large $k$. In this way the highest weight vector is
$u_{\emptyset}:=u_{-1}\wedge u_{-2}\wedge\dots $.
The famous boson-fermion correspondence identifies $\mathfrak F$ with the space of symmetric functions. That in particular implies that $\mathfrak F$ has a natural
basis  $\{u_\lambda\}$ enumerated by partitions $\lambda$ (this basis corresponds to Schur functions) where
$$u_\lambda:=u_{\lambda_1-1}\wedge u_{\lambda_2-2}\wedge u_{\lambda_3-3}\wedge\dots.$$

Let $t\in\mathbb Z$. We denote by $\mathfrak F^\vee_t$ the simple  $\mathfrak{sl}(\infty)$-module with lowest weight $-\omega_{t-1}$. We will use the
following realization
of  $\mathfrak F^\vee_t$. Set $\mathbb V^\vee=\mathbb C^{\mathbb Z}$ with basis $\{w_i\mid i\in\mathbb Z\}$ and define the action of $e_a,f_a$ on $\mathbb V^\vee$ by
$$e_a w_b=\delta_{a,b}w_{b+1},\quad f_a w_b=\delta_{a+1,b}w_{b-1}.$$ Then $\mathfrak F^\vee_t$ is the span of all formal expressions $w_{i_1}\wedge w_{i_2}\wedge\dots $
satisfying the conditions $i_j>i_{j+1}$ for all $j\geq 1$ and $i_k=t-k$ for sufficiently large $k$. We can enumerate the elements of the basis of $\mathfrak F_t^\vee$ by partitions 
$$w_\mu:=w_{\mu_1+t-1}\wedge w_{\mu_2+t-2}\wedge w_{\mu_3+t-3}\wedge\dots.$$

The goal of this paper is to describe the structure of $\mathfrak F^\vee_t\otimes\mathfrak F$. Let us consider $(m,n)\in\mathbb Z^2$ such that $m-n=t$.
As follows from \cite{PS} $\Lambda^{m}\mathbb V^\vee\otimes \Lambda^n\mathbb V$ is an indecomposable $\mathfrak{sl}(\infty)$-module with simple socle
$S_{m,n}$. To describe this socle consider the contraction map $c:\mathbb V\otimes\mathbb V^\vee\to\mathbb C$ given by $c(w_i\otimes u_j)=(-1)^j\delta_{i,j}$
and extend it to $c_{m,n}:\Lambda^{m}\mathbb V^\vee\otimes \Lambda^n\mathbb V\to \Lambda^{m-1}\mathbb V^\vee\otimes \Lambda^{n-1}\mathbb V$. Then $S_{m,n}$ is the kernel
of $c_{m,n}$.

\begin{theorem}\label{main}
  \begin{enumerate}
\item  The $\mathfrak{sl}(\infty)$-module  $\mathfrak R:=\mathfrak F^\vee_t\otimes\mathfrak F$ has an infinite decreasing
  filtration
  $$\mathfrak R:=\mathfrak R^0\supset \mathfrak R^1\supset\dots \supset\mathfrak R^k\supset \dots$$
  such that $\cap_k \mathfrak R^k=0$ and
  $$\mathfrak R^k/\mathfrak R^{k+1}
    \simeq \begin{cases}S_{k+t,k}\,\,\text{if}\,\,t\geq 0,\\S_{k,k-t}\,\,\text{if}\,\,t<0.\end{cases}$$
  \item Every non-zero submodule of $\mathfrak R$ coincides with $\mathfrak R^r$ for some $r\geq 0$.
    \end{enumerate}
\end{theorem}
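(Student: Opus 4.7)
My plan is to reduce the theorem to the known structure of the finite-dimensional tensor products $\Lambda^{n+t}\mathbb V^\vee\otimes\Lambda^n\mathbb V$ by an exhaustion argument. For each $N\geq 0$, consider the $\mathfrak{sl}(\infty)$-stable subspace
$$\mathfrak R_N:=\mathrm{span}\{w_\mu\otimes u_\lambda:\ell(\mu),\ell(\lambda)\leq N\}\subset\mathfrak R;$$
these exhaust $\mathfrak R$, and by extracting the common vacuum tails of $w_\mu$ and $u_\lambda$ one identifies $\mathfrak R_N$ with (the image of) $\Lambda^{N+t}V^\vee\otimes\Lambda^N V$ for an appropriate finite-dimensional subspace $V\subset \mathbb V$. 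By \cite{PS}, this finite tensor product carries a contraction filtration $0\subset F_1\subset\cdots\subset F_{N+1}=\Lambda^{N+t}V^\vee\otimes\Lambda^N V$ whose subquotients are precisely $S_{N+t,N},S_{N+t-1,N-1},\ldots,S_{t,0}$ (in the case $t\geq 0$; the case $t<0$ is symmetric). The filtration $\mathfrak R^k$ of the theorem will be chosen so that its restriction to $\mathfrak R_N$ matches this finite one.

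Concretely, for $t\geq 0$ I would construct $\mathfrak{sl}(\infty)$-equivariant maps $\Phi_k:\mathfrak R\to S_{k+t,k}$ via a regularized semi-infinite contraction, then set $\mathfrak R^k=\bigcap_{j<k}\ker\Phi_j$. On a basis element $w_\mu\otimes u_\lambda$, the map $\Phi_k$ should be the sum, over all ways of selecting $k+t$ factors from $w_\mu$ and $k$ factors from $u_\lambda$, of the iterated pairing $c$ applied to the remaining matched pairs; the resulting element is then projected onto the simple socle $S_{k+t,k}\subset\Lambda^{k+t}\mathbb V^\vee\otimes\Lambda^k\mathbb V$. Because $w_\mu\otimes u_\lambda$ differs from the vacuum $w_\emptyset\otimes u_\emptyset$ in only finitely many factors, only finitely many terms are nonzero, so $\Phi_k$ is a well-defined linear map, and $\mathfrak{sl}(\infty)$-equivariance is automatic from the equivariance of $c$. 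Intersecting with $\mathfrak R_N$ reproduces the finite filtration described above, yielding both $\mathfrak R^k/\mathfrak R^{k+1}\simeq S_{k+t,k}$ and $(\bigcap_k\mathfrak R^k)\cap\mathfrak R_N=0$; since $\mathfrak R=\bigcup_N\mathfrak R_N$, this gives part (1).

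For part (2), let $M\subseteq\mathfrak R$ be a nonzero submodule. Because $\bigcap_k\mathfrak R^k=0$, there is a largest $r$ with $M\subseteq\mathfrak R^r$; then $M$ maps nontrivially, hence surjectively by simplicity, onto $\mathfrak R^r/\mathfrak R^{r+1}\simeq S_{r+t,r}$. To conclude $M\supseteq\mathfrak R^{r+1}$, I would show that $\mathfrak R^r/\mathfrak R^{r+2}$ is indecomposable, so that any lift of a nonzero vector in the top $S_{r+t,r}$ generates the whole quotient under $U(\mathfrak{sl}(\infty))$; this is verified on $\mathfrak R_N\cap\mathfrak R^r$ for large $N$, where the analogous statement inside $\Lambda^{N+t}V^\vee\otimes\Lambda^N V$ is the known non-splitting of its socle filtration, and then passed to the direct limit. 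The main obstacle is the first step: making the semi-infinite contraction $\Phi_k$ precise. The naive sum has infinitely many matching indices in the common vacuum tail, so one must fix conventions for reordering semi-infinite wedges, track the resulting signs, and verify that the image in fact lands in the simple socle $S_{k+t,k}$ rather than in a larger submodule of $\Lambda^{k+t}\mathbb V^\vee\otimes\Lambda^k\mathbb V$. Once this technical point is settled, the remainder of the proof is an exhaustion argument and an appeal to the finite-dimensional theory.
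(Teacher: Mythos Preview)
Your plan has two genuine errors, one of which is fatal to the strategy for part~(1).

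First, the subspaces $\mathfrak R_N=\mathrm{span}\{w_\mu\otimes u_\lambda:\ell(\mu),\ell(\lambda)\le N\}$ are \emph{not} $\mathfrak{sl}(\infty)$-stable. For instance with $t=0$, $N=1$, $l\ge 1$, one has $f_{-2}\,u_{(l)}=u_{(l,1)}$, which has length~$2$. What is true is that such a subspace is stable under a subalgebra $\mathfrak l_s^+\simeq\mathfrak{sl}(\infty)$ generated by $e_a,f_a$ with $a>s$ for suitable $s$; the paper exploits exactly this in its proof of part~(2), working with the $\mathfrak l_s^-$-invariants $\mathfrak R_s^+$ and passing to the limit $s\to-\infty$.

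Second, and more seriously, the maps $\Phi_k:\mathfrak R\to S_{k+t,k}$ you propose \emph{cannot exist} as $\mathfrak{sl}(\infty)$-homomorphisms for $k\ge 1$. Indeed, the theorem you are trying to prove implies that $\mathfrak R$ is uniserial with unique simple quotient $\mathfrak R/\mathfrak R^1\simeq S_{t,0}$; since $S_{k+t,k}\not\simeq S_{t,0}$ for $k\ge 1$, every $\mathfrak{sl}(\infty)$-map $\mathfrak R\to S_{k+t,k}$ is zero. (Also, $S_{k+t,k}$ is the \emph{socle} of $\Lambda^{k+t}\mathbb V^\vee\otimes\Lambda^k\mathbb V$, not a quotient, so there is no canonical projection onto it.) In fact the paper explicitly warns that $\mathfrak R/\mathfrak R^{k+1}$ and $\Lambda^{k+t}\mathbb V^\vee\otimes\Lambda^k\mathbb V$, while uniserial with the same composition factors, are not isomorphic as $\mathfrak{sl}(\infty)$-modules; hence there is no nonzero map from $\mathfrak R$ into $\Lambda^{k+t}\mathbb V^\vee\otimes\Lambda^k\mathbb V$ either, for $k\ge 1$. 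So the difficulty you flag as ``the main obstacle'' is not a regularization problem but an honest obstruction: the filtration cannot be produced by kernels of maps to the $S_{k+t,k}$.

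The paper avoids this by a completely different route for part~(1): it identifies $\mathfrak R$ with $K[\mathcal V_t]_{\mathbb C}$ and sets $\mathfrak R^k:=\ker ds_{k+t-1\,|\,k-1}$, where $ds_{m|n}$ is induced by the Duflo--Serganova functor $\mathcal V_t\to\operatorname{Rep}GL(m|n)$. The successive quotients are identified with the span of classes of indecomposable projectives in $J_{m|n}$, which is $S_{m|n}$ by a cited result. Your exhaustion idea for part~(2) is close in spirit to the paper's argument, but it must be run over the subalgebras $\mathfrak l_s^+$ rather than over $\mathfrak{sl}(\infty)$, and one concludes by matching cyclic generators $v(p)$ rather than by the two-step indecomposability argument you sketch (which by itself would only give $M+\mathfrak R^{r+k}=\mathfrak R^r$ for all $k$, not $M=\mathfrak R^r$).
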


The proof of this theorem is based on categorification of $\mathfrak F^\vee_t\otimes\mathfrak F$ by the complexified Grothendieck group $K[\mathcal V_t]_{\mathbb C}$
of the abelian
envelope $\mathcal V_t$ of the Deligne category $\operatorname{Rep}GL_t$ explained in \cite{E} and Brundan categorification of
$\Lambda^{m}\mathbb V^\vee\otimes \Lambda^n\mathbb V$ via representation theory of the supergroup $GL(m|n)$, \cite{B}. We use the symmetric monoidal functor
$$DS_{m,n}: \mathcal V_t\to \operatorname{Rep}GL(m|n)$$
for $m-n=t$.  Existence of such functor follows from construction of $\mathcal V_t$, see \cite{EHS}.
While $DS_{m,n}$ is not exact, it has a certain  property, see Lemma \ref{Hinich} below, which allows to define the linear map
$$ds_{m,n}:K[\mathcal V_t]_{\mathbb C}\to K_{red}[\operatorname{Rep}GL(m|n)]_{\mathbb C}$$ where by $K_{red}$ we denote the quotient of the Grothendieck
group $K$ by the relation
$[\mathbb C^{0|1}]=-[\mathbb C]$ in the category $\operatorname{Rep}GL(m|n)$. Furthermore, $ds_{m,n}$ is a homomorphism of rings and also a homomorphism
of $\mathfrak{sl}(\infty)$-modules. We prove that  the quotients $\Ker ds_{m-1,n-1}/\Ker ds_{m,n}$ form the layers of the radical filtration of
$\mathfrak F^\vee_t\otimes\mathfrak F\simeq K[\mathcal V_t]_{\mathbb C}$. Let us warn the reader that the image of $ds_{m,n}$ is not
$\Lambda^{m}\mathbb V^\vee\otimes \Lambda^n\mathbb V$ but another submodule in  $K_{red}[\operatorname{Rep}GL(m|n)]_{\mathbb C}$. While this
submodule has the same Jordan-Hoelder series
as $\Lambda^{m}\mathbb V^\vee\otimes \Lambda^n\mathbb V$, it is not isomorphic to
$\Lambda^{m}\mathbb V^\vee\otimes \Lambda^n\mathbb V$ as an $\mathfrak{sl}(\infty)$-module.

The second part of the paper contains calculation of dimensions of certain objects in $\mathcal V_t$.

The author was supported by NSF grant DMS-1701532. The author would like to thank Inna Entova-Aizenbud for reading the first version of the paper and pointing out
typos and unclear arguments.

\section{The category  $\operatorname{Rep}GL(m|n)$ and $DS$ functors}

\subsection{Translation functors} Let $\operatorname{Rep}GL(m|n)$ denote the category of finite-dimensional $GL(m|n)$-modules.
Let $\mu=(a_1,\dots,a_m|b_1,\dots b_n)\in\mathbb Z^{m+n}$
satisfy the condition $a_1\geq a_2\geq \dots\geq a_m, b_1\geq b_2\geq\dots\geq b_n$. For every such $\mu$ there are three canonical objects in  
$\operatorname{Rep}GL(m|n)$:
\begin{enumerate}
\item The simple module $S(\mu)$ with highest weight $\mu$;
\item The Kac module $K(\mu):=U(\mathfrak{gl}(m|n))\otimes_{U(\mathfrak{p})} S_0(\mu)$, where $\mathfrak p$ is the parabolic subalgebra with Levi subalgebra
    $\mathfrak{gl}(m|n)_{\bar 0}$, $S_0(\mu)$ is the  simple $\mathfrak{gl}(m|n)_{\bar 0}$-module with highest weight $\mu$;
    \item The indecomposable projective cover $P(\mu)$ of $S(\mu)$. 
    \end{enumerate}

    The category   $\operatorname{Rep}GL(m|n)$ is the highest weight category, \cite{Z}. We denote by $K_{red}[\operatorname{Rep}GL(m|n)]$ the reduced
  Grothendieck group of   $\operatorname{Rep}GL(m|n)$ and set $$J_{m|n}:=K_{red}[\operatorname{Rep}GL(m|n)]\otimes_{\mathbb Z}\mathbb C.$$

It was a remarkable discovery of J. Brundan that $J_{m|n}$ has a natural structure of $\mathfrak{sl}(\infty)$-module, \cite{B}. To define it let us consider
translation functors
$E_a, F_a : \operatorname{Rep}GL(m|n)\to \operatorname{Rep}GL(m|n)$ defined in the following way. There is a canonical $\mathfrak{gl}(m|n)$-invariant map
$\omega:\mathbb C\to\mathfrak{gl}(m|n)\otimes \mathfrak{gl}(m|n)$ usually called the Casimir element. Let $V_{m|n}$ be the standard $GL(m|n)$-module and
$M$ be an arbitrary
object of $\operatorname{Rep}GL(m|n)$. Let $\Omega$ be the composition map
$$\mathbb C\otimes M\otimes V_{m|n}\xrightarrow{\omega\otimes\id}\mathfrak{gl}(m|n)\otimes \mathfrak{gl}(m|n) \otimes M\otimes V_{m|n}
\xrightarrow{\id\otimes s\otimes\id}$$
$$ \mathfrak{gl}(m|n)\otimes M\otimes\mathfrak{gl}(m|n)\otimes V_{m|n}\xrightarrow{a_M\otimes a_{V_{m|n}}} M\otimes V_{m|n},
$$
where $s$ is the braiding in  $\operatorname{Rep}GL(m|n)$ defined by the sign rule and $a_M,a_{V_{m|n}}$ are the action maps.
Let $E_a(M)$ be the generalized eigenspace  of $\Omega$ in $M\otimes V_{m|n}$ with eigenvalue $a$.
Similarly, we define $F_a(M)$  as the generalized eigenspace  of $\Omega'$ in $M\otimes V^*_{m|n}$ with eigenvalue $a$, where $\Omega'$ is defined as
above with substitution of $V_{m|n}^*$ in place of $V_{m|n}$.

The following theorem is a direct consequence of results in \cite{B}.

\begin{theorem}\label{B}
  \begin{enumerate}
  \item $E_a$, $F_a$ are non-zero only for $a\in\mathbb Z$;
  \item $E_a$, $F_a$ are biadjoint exact endofunctors of $\operatorname{Rep}GL(m|n)$;
  \item Let $e_a,f_a: J_{m|n}\to J_{m|n}$  be the induced $\mathbb C$-linear maps. Then $e_a,f_a$ satisfy the Chevalley-Serre relations for $A_{\infty}$.
    Hence $J_{m|n}$ is an $\mathfrak{sl}(\infty)$-module.
  \item The subspace of $\Lambda_{m|n}\subset J_{m|n}$ generated by classes of all Kac modules $[K(\mu)]$ is an $\mathfrak{sl}(\infty)$-submodule isomorphic to
    $\Lambda^{m}\mathbb V^\vee\otimes \Lambda^n\mathbb V$.
  \end{enumerate} 
\end{theorem}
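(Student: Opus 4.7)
The plan is to extract each of the four assertions from Brundan's categorification framework in \cite{B}. First I would analyze the spectrum of the Casimir-type operator $\Omega$ acting on $M\otimes V_{m|n}$ for $M$ a simple module $S(\mu)$. Decomposing $V_{m|n}$ into its weight spaces and using that the standard weights of $V_{m|n}$ are basis vectors $\varepsilon_i,\delta_j$, the highest weight vectors of composition factors of $S(\mu)\otimes V_{m|n}$ have the form $\mu+\varepsilon_i$ or $\mu+\delta_j$, and a direct calculation of $\Omega$ on such a vector yields an eigenvalue equal to a $\rho$-shifted weight entry $a_i-i$ or $b_j-j+m$. In particular, these eigenvalues are integers, giving part (1).

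For part (2), exactness is immediate: tensoring with the finite-dimensional module $V_{m|n}$ (or its dual) is exact, and the generalized eigenspace decomposition of $\Omega$ splits as a direct sum of subfunctors, so each $E_a$, $F_a$ is exact. Biadjointness of $-\otimes V_{m|n}$ with $-\otimes V_{m|n}^*$ is standard in the rigid symmetric monoidal category $\operatorname{Rep}GL(m|n)$, and since $\Omega'$ is the corresponding Casimir on $M\otimes V_{m|n}^*$ the adjunctions restrict to the eigenspace summands, yielding biadjointness of $E_a$ and $F_a$.

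For part (3), following Brundan I would compute the action of $e_a,f_a$ on the basis $\{[K(\mu)]\}$ of $\Lambda_{m|n}$ explicitly. The key input is the Kac-module filtration of $K(\mu)\otimes V_{m|n}$ and $K(\mu)\otimes V_{m|n}^*$, whose layers are again Kac modules $K(\mu')$ with $\mu'$ differing from $\mu$ by a single entry; projecting onto the generalized $\Omega$-eigenspace with eigenvalue $a$ then selects those $\mu'$ for which the changed entry has $\rho$-shifted value $a$. This gives closed formulas that can be checked to satisfy the $A_\infty$ Chevalley--Serre relations combinatorially. The hardest step here is to verify that these relations hold not merely on Kac classes but on the full reduced Grothendieck group $J_{m|n}$, which requires knowing that $\Lambda_{m|n}$ spans $J_{m|n}$ modulo the reducing relation $[\mathbb C^{0|1}]=-[\mathbb C]$; this is a consequence of the highest weight category structure \cite{Z} together with the fact that each simple module has a resolution by Kac modules in the reduced Grothendieck group.

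For part (4), I would define an explicit $\mathbb C$-linear isomorphism
$$\Phi:\Lambda_{m|n}\longrightarrow \Lambda^{m}\mathbb V^\vee\otimes \Lambda^n\mathbb V$$
by $\Phi([K(\mu)]):=(w_{a_1-1}\wedge w_{a_2-2}\wedge\cdots\wedge w_{a_m-m})\otimes(u_{-b_1-1}\wedge u_{-b_2-2}\wedge\cdots\wedge u_{-b_n-n})$ (with the precise $\rho$-shift dictated by the eigenvalue calculation of step (1)). Checking that $\Phi$ intertwines the combinatorial formulas for $e_a,f_a$ computed in step (3) with the actions of $e_a,f_a$ on the wedge factors defined in the introduction reduces to matching the "box-adding/removing" combinatorics in both pictures. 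The main obstacle throughout is bookkeeping of shifts and signs so that the identification of Kac classes with pure tensors of semi-infinite wedges is precise; once this is fixed, everything reduces to the standard categorical action of $\mathfrak{sl}(\infty)$ on exterior powers of its natural representation.
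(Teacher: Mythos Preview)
The paper does not give a proof of this theorem at all; it simply records it as ``a direct consequence of results in \cite{B}'' and moves on. Your proposal goes further by sketching how the four assertions are extracted from Brundan's framework, and parts (1), (2) and (4) are outlined correctly.

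There is, however, a genuine gap in your argument for part (3). You reduce the verification of the Chevalley--Serre relations on all of $J_{m|n}$ to the claim that the Kac classes span $J_{m|n}$, justified by saying that each simple has a Kac-module resolution in $K_{red}$. This is false in atypical blocks. Already for $\mathfrak{gl}(1|1)$ one has, in the atypical block and in $K_{red}$, $[K(a|b)]=[S(a|b)]-[S(a-1|b+1)]$, so expressing $[S(a|b)]$ as a combination of Kac classes forces an infinite telescoping sum. Thus $\Lambda_{m|n}\subsetneq J_{m|n}$; the paper itself confirms this, since Lemma~\ref{HR}(2) identifies $\Lambda_{m|n}$ with the kernel of the nonzero map $ds_x$, and Proposition~\ref{HPS} gives $J_{m|n}$ a strictly longer composition series than $\Lambda^{m}\mathbb V^\vee\otimes\Lambda^{n}\mathbb V$. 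Consequently, checking the Serre relations only on Kac classes does not suffice.

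The repair is not hard but is different from what you wrote: one either verifies the relations at the level of \emph{functors} (Brundan's Casimir computation yields isomorphisms such as $E_aF_b\cong F_bE_a$ for $a\neq b$ and the appropriate identity for $a=b$, and these descend automatically to $J_{m|n}$ regardless of any spanning set), or one passes through category $\mathcal O$, where Verma classes genuinely form a basis and the combinatorial check on that basis is legitimate, and then restricts to the finite-dimensional subcategory.
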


We need the exact description of the socle filtration of $J_{m|n}$ obtained in \cite{HPS}, Corollary 29.

\begin{proposition}\label{HPS} The $\mathfrak{sl}(\infty)$-module $J_{m|n}$ has finite length. Furthermore, the socle filtration of $J_{m|n}$ is given by the formula  
  $$\soc^i(J_{m|n})/\soc^{i-1}(J_{m|n})\simeq S_{m-i+1|n-i+1}^{\oplus i}.$$
  In particular, the socle of $J_{m|n}$ is a simple $\mathfrak{sl}(\infty)$-module isomorphic to $S_{m|n}$. It is identified with the subspace generated by classes of
  all projective modules $[P(\mu)]$.
  \end{proposition}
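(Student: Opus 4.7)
My strategy is induction on $d := \min(m,n)$, using the Duflo--Serganova functor $DS: \operatorname{Rep} GL(m|n) \to \operatorname{Rep} GL(m-1|n-1)$ associated to a rank-one odd element, which descends to an $\mathfrak{sl}(\infty)$-equivariant linear map on reduced Grothendieck groups. The base case $d = 0$ is immediate: when $n = 0$, the category $\operatorname{Rep} GL(m)$ is semisimple, every simple is projective, and $J_{m|0}$ is the irreducible $\mathfrak{sl}(\infty)$-module $\Lambda^m \mathbb{V}^\vee \simeq S_{m|0}$, matching the stated formula with a single nontrivial layer.

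For the induction step I would first identify the socle. The span $P_{m|n} \subset J_{m|n}$ of classes $[P(\mu)]$ of indecomposable projectives is an $\mathfrak{sl}(\infty)$-submodule, since the translation functors $E_a, F_a$ preserve projectivity (being exact with biadjoints). I would identify $P_{m|n} \simeq S_{m|n}$ by comparing it with the socle of the Kac-module submodule $\Lambda_{m|n} \simeq \Lambda^m \mathbb{V}^\vee \otimes \Lambda^n \mathbb{V}$ from Theorem \ref{B}(4), whose socle is $S_{m|n}$ by \cite{PS}, using BGG-type flag formulas that write each $[P(\mu)]$ as a signed sum of Kac classes and tracking which combinations land in this socle.

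Next, using the DS functor I would produce a linear map $ds: J_{m|n} \to J_{m-1|n-1}$, prove it is surjective, and identify $\ker ds$ with $P_{m|n}$. Surjectivity follows from the fact that every simple $GL(m-1|n-1)$-module is a $DS$-image of some simple $GL(m|n)$-module. The kernel description uses that $DS$ kills indecomposable projectives in the reduced Grothendieck group (via the relation $[\mathbb{C}^{0|1}] = -[\mathbb{C}]$). This yields the short exact sequence
\begin{equation*}
0 \to S_{m|n} \to J_{m|n} \xrightarrow{ds} J_{m-1|n-1} \to 0.
\end{equation*}

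The main obstacle is then to establish the multiplicities $\oplus i$ rather than $\oplus(i-1)$ in the $i$-th layer of the socle filtration: a naive preimage of the inductive filtration accounts for only $i - 1$ copies of $S_{m-i+1|n-i+1}$ in layer $i$, so an additional copy must be located at each level. I expect the extra copy to come from the Kac-module submodule $\Lambda_{m|n}$ itself, whose own socle filtration (per \cite{PS}) has layers $S_{m-i+1|n-i+1}$ of multiplicity one. Combined with an Ext-vanishing analysis between $\mathfrak{sl}(\infty)$-simples $S_{p|q}$ in distinct blocks to rule out spurious higher layers, these extra contributions should produce the stated formula.
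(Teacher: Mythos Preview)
The paper does not prove this proposition; it is simply quoted as Corollary~29 of \cite{HPS}. So there is no ``paper's own proof'' to compare against, and your attempt must be evaluated on its own merits.

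Your inductive strategy via the Duflo--Serganova map is the natural one, but there is a concrete error in the identification of the kernel. You claim $\ker ds = P_{m|n}$, the span of projective classes, arguing that $DS$ kills projectives. That inclusion $P_{m|n}\subset\ker ds$ is correct, but it is not an equality: by the result of Hoyt--Reif (recorded in this paper as Lemma~\ref{HR}(2)) the kernel of $ds_x:J_{m|n}\to J_{m-1|n-1}$ is exactly $\Lambda_{m|n}$, the span of \emph{Kac} module classes, which strictly contains $P_{m|n}$ whenever $\min(m,n)\ge 1$. A quick composition-factor count already detects the problem: with your short exact sequence the length of $J_{m|n}$ would be $1+\binom{d+1}{2}$, whereas the asserted socle filtration forces length $\binom{d+2}{2}$ (here $d=\min(m,n)$).

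In fact your ``main obstacle'' paragraph is a symptom of this miscount rather than an independent difficulty. With the correct sequence
\[
0\;\longrightarrow\;\Lambda_{m|n}\;\longrightarrow\;J_{m|n}\;\xrightarrow{\ ds\ }\;J_{m-1|n-1}\;\longrightarrow\;0,
\]
the kernel already carries one copy of each $S_{m-i+1|n-i+1}$ (its socle layers, by \cite{PS}), and the quotient by induction carries $i-1$ copies in its $(i-1)$-st layer, so the composition multiplicities add up without needing to locate an ``extra'' copy. What genuinely remains is to show that the socle filtration of the extension interleaves as claimed, i.e.\ that $\soc^i(J_{m|n})$ is the preimage of $\soc^{i-1}(J_{m-1|n-1})$; this is where one needs control over extensions among the $S_{p|q}$ and is the substantive content of the argument in \cite{HPS}. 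Your sketch does not supply this step, and the Ext-vanishing you invoke (``between simples in distinct blocks'') is not the relevant one: the simples $S_{m-i+1|n-i+1}$ all lie in the same $\mathfrak{sl}(\infty)$-block.
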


  \subsection{$DS$-functor} Fix  an odd $x\in\mathfrak{gl}(m|n)_{\bar 1}$ such that $[x,x]=0$ and $\operatorname{rk} x=1$. Define a functor
  $DS_x$ from $\operatorname{Rep}GL(m|n)$ to the category of vector superspaces by setting
  $$DS_x(M)=\Ker x_M/\Im x_M.$$
  It is shown in \cite{DS} that  $M_x$ has a natural structure of $GL(m-1|n-1)$-module and $DS_x$ is a symmetric monoidal functor
  $$\operatorname{Rep}GL(m|n)\to\operatorname{Rep}GL(m-1|n-1).$$
  Furthermore, although $DS_x$ is not an exact functor it has the following property pointed out by V. Hinich. For the proof see \cite{HPS} Lemma 30.
  \begin{lemma}\label{Hinich} Every exact sequence
    $0\to N\to M\to K\to 0$
 of $GL(m|n)$-modules    induces the exact sequence
    $$0\to E\to DS_x N\to DS_x M\to DS_xK\to E'\to 0$$
    for certain $E\in \operatorname{Rep}GL(m-1|n-1)$ and $E'\simeq E\otimes\mathbb C^{0|1}$.
    \end{lemma}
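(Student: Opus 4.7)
The plan is to view $DS_x$ as the cohomology of a $\mathbb Z/2$-graded differential complex and apply the standard long exact sequence, which in the two-periodic setting becomes a hexagon.

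First I would observe that for any $M\in\operatorname{Rep}GL(m|n)$ the odd endomorphism $x_M$ with $x_M^2=0$ turns $M$ into a $\mathbb Z/2$-graded complex
$$
\begin{CD}
M_{\bar 0}@>x>>M_{\bar 1}@>x>>M_{\bar 0},
\end{CD}
$$
and $DS_x(M)=H_{\bar 0}(M)\oplus H_{\bar 1}(M)$, where $H_{\bar 0}(M)=\Ker(x|_{M_{\bar 0}})/\Im(x|_{M_{\bar 1}})$ and similarly for $H_{\bar 1}$. A short exact sequence $0\to N\to M\to K\to 0$ is in particular a short exact sequence of such two-periodic complexes, so the snake lemma in this setting produces a $6$-term cyclic exact sequence
$$
H_{\bar 0}(N)\to H_{\bar 0}(M)\to H_{\bar 0}(K)\xrightarrow[]{\delta_0} H_{\bar 1}(N)\to H_{\bar 1}(M)\to H_{\bar 1}(K)\xrightarrow[]{\delta_1} H_{\bar 0}(N),
$$
with connecting maps $\delta_0,\delta_1$ defined in the usual way (lift, apply $x$, descend).

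Next I would extract the $5$-term sequence. Define $E:=\Ker(DS_xN\to DS_xM)$ and $E':=\Coker(DS_xM\to DS_xK)$, both of which sit naturally in $\operatorname{Rep}GL(m-1|n-1)$ because the constructions are functorial for the action of the centralizer of $x$, which projects onto $GL(m-1|n-1)$. Exactness of the hexagon at $DS_xN$ identifies
$$
E_{\bar 0}=\Im(\delta_1),\qquad E_{\bar 1}=\Im(\delta_0),
$$
while exactness at $DS_xK$ identifies
$$
E'_{\bar 0}\xrightarrow{\sim}\Im(\delta_0)=E_{\bar 1},\qquad E'_{\bar 1}\xrightarrow{\sim}\Im(\delta_1)=E_{\bar 0},
$$
with the isomorphisms implemented by the (odd) connecting maps $\delta_0,\delta_1$ themselves. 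These parity-swapping isomorphisms assemble into a $GL(m-1|n-1)$-equivariant isomorphism $E'\simeq E\otimes\CC^{0|1}$, which is precisely the asserted statement.

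The main obstacle is bookkeeping: one must check that the hexagon is genuinely exact in the super setting (the usual snake lemma argument carries over verbatim once one respects signs), and, more delicately, that the identifications $E'_{\bar\epsilon}\cong E_{\bar\epsilon+\bar 1}$ coming from the two different connecting maps are compatible with the $GL(m-1|n-1)$-action. Both compatibilities follow from the fact that $\delta_0$ and $\delta_1$ are morphisms in the category of complexes of modules over the centralizer of $x$, so they descend to equivariant maps after passing to cohomology. Everything else is formal.
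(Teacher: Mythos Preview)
Your argument is correct and is exactly the standard one: the two-periodic snake lemma gives the hexagon, and the connecting homomorphisms furnish the parity-reversing identification $E'\simeq\Pi E$. The paper does not actually prove this lemma; it attributes the observation to Hinich and refers to \cite{HPS}, Lemma~30, where precisely this hexagon argument is recorded. So there is nothing to compare---you have reproduced the intended proof.

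One small remark on presentation: when you say the centralizer of $x$ ``projects onto $GL(m-1|n-1)$'', what is really used (and established in \cite{DS}) is that $\mathfrak{gl}(m-1|n-1)$ is a quotient of the centralizer $\mathfrak{gl}(m|n)^x$ by an ideal acting trivially on $DS_x(M)$; this is why the connecting maps are $GL(m-1|n-1)$-equivariant. You are implicitly invoking this, and the paper takes it as given, but it is worth being explicit since it is the one point where something beyond pure homological algebra enters.
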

    It follows immediately from Lemma \ref{Hinich} that $DS_x$ induces a homomorphism of complexified reduced Grothendieck groups $ds_x:J_{m|n}\to J_{m-1|n-1}$.
    While $DS_x$ and $DS_y$ are
    not isomorphic if $x$ and $y$ are not conjugate by the adjoint action of $GL(m)\times GL(n)$, the homomorphism $ds_x$ does not depend on a choice of $x$.
    In \cite{HR} the homomorphism $ds_x$ was constructed explicitly in terms of supercharacters and the kernel of $ds_x$ was computed.
    \begin{lemma}\label{HR}
      \begin{enumerate}
      \item $DS_x$ commutes with translation functors $E_a,F_a$ and hence $DS_x$ induces  a homomorphism
        $ds_x: J_{m|n}\to J_{m-1|n-1}$ of $\mathfrak{sl}(\infty)$-modules.
        \item The kernel of $ds_x$ coincides with $\Lambda_{m|n}$.  
        \end{enumerate}
      \end{lemma}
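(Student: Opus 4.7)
The plan is to prove the two parts separately, treating (1) by a monoidality argument and (2) by combining a supercharacter computation on Kac modules with the structural information supplied by Proposition \ref{HPS}.

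For part (1), I would first invoke symmetric monoidality of $DS_x$ together with the identification $DS_x(V_{m|n})\simeq V_{m-1|n-1}$ (up to a parity shift that is absorbed in the reduced Grothendieck group) to obtain a natural isomorphism $DS_x(M\otimes V_{m|n})\simeq DS_x(M)\otimes V_{m-1|n-1}$, and analogously for the dual. Next, since the Casimir tensor $\omega$ is $\mathfrak{gl}(m|n)$-invariant and $x\in\mathfrak{gl}(m|n)_{\bar 1}$, the operator $\Omega$ on $M\otimes V_{m|n}$ commutes with the action of $x$, hence descends to an endomorphism of $\Ker x/\Im x\simeq DS_x(M)\otimes V_{m-1|n-1}$. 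A short computation identifies this descended operator with the analogous Casimir operator $\Omega^{(m-1|n-1)}$ for $\mathfrak{gl}(m-1|n-1)$, up to a constant scalar depending only on the central character block of $M$, and in particular the generalized-eigenspace decompositions match. This gives natural isomorphisms $DS_x\circ E_a\simeq E_a\circ DS_x$ and similarly for $F_a$; combined with Lemma \ref{Hinich}, this forces $ds_x$ to intertwine $e_a$ and $f_a$, proving (1).

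For part (2), the inclusion $\Lambda_{m|n}\subseteq\Ker ds_x$ is a supercharacter computation: since $\Lambda_{m|n}$ is spanned by classes of Kac modules by Theorem \ref{B}(4), it suffices to verify $ds_x[K(\mu)]=0$ for each highest weight $\mu$. As a super vector space $K(\mu)\simeq\Lambda(\mathfrak{gl}(m|n)^-_{\bar 1})\otimes S_0(\mu)$, and its supercharacter carries a factor $\prod_{\beta}(1-e^{-\beta})$ over positive odd roots $\beta$. Under the supercharacter specialization realizing $ds_x$, the factor indexed by the root associated with $x$ vanishes, forcing $ds_x[K(\mu)]=0$. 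For the reverse inclusion $\Ker ds_x\subseteq\Lambda_{m|n}$, I would use Proposition \ref{HPS}, which gives the socle filtration of $J_{m|n}$ explicitly, to reduce to showing that $ds_x$ remains injective on a single representative of each layer of $J_{m|n}/\Lambda_{m|n}$. Natural candidates are provided by projective classes $[P(\mu)]$, whose behaviour under $DS_x$ is controllable and known to yield nonzero projective classes in $J_{m-1|n-1}$.

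The main obstacle is the second inclusion in (2): while the Kac module calculation is essentially formal, injectivity of $ds_x$ on $J_{m|n}/\Lambda_{m|n}$ requires controlling the interaction between the supercharacter map and the transition matrix between the $[K(\mu)]$-basis and the $[S(\mu)]$-basis, whose entries involve Kazhdan--Lusztig polynomials. Using Proposition \ref{HPS} to pass to one socle layer at a time, combined with the known compatibility of $DS_x$ with projectives, should be enough to sidestep the full combinatorial calculation.
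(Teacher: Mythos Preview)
The paper does not prove this lemma itself; it simply cites \cite{HPS}, Lemma~32 for part~(1) and \cite{HR} for part~(2). So there is no detailed argument in the paper to match, and your sketch for part~(1) is essentially the standard monoidality-and-Casimir argument that one expects to find in \cite{HPS}. Likewise, your supercharacter computation for the inclusion $\Lambda_{m|n}\subseteq\Ker ds_x$ is the right idea and is how the easy direction goes in \cite{HR}.

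The genuine gap is in your strategy for the reverse inclusion $\Ker ds_x\subseteq\Lambda_{m|n}$. You propose to use Proposition~\ref{HPS} to reduce to checking injectivity of $ds_x$ ``on a single representative of each layer of $J_{m|n}/\Lambda_{m|n}$'' and then to supply those representatives by projective classes $[P(\mu)]$. This does not work as stated, for two reasons. First, by Proposition~\ref{HPS} the classes $[P(\mu)]$ span exactly the \emph{socle} $S_{m|n}$ of $J_{m|n}$, so they cannot serve as representatives of the higher layers of the filtration; and since $S_{m|n}$ is simple and does not occur as a composition factor of $J_{m-1|n-1}$, the restriction of $ds_x$ to $\soc J_{m|n}$ is necessarily zero (consistently with $\soc J_{m|n}\subset\Lambda_{m|n}$), so the behaviour of $DS_x$ on projectives gives you no leverage on the quotient. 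Second, even granting the socle filtration of both $J_{m|n}/\Lambda_{m|n}$ and $J_{m-1|n-1}$, injectivity of a homomorphism is not detected layer-by-layer when the layers have multiplicities: here $S_{m-k|n-k}$ occurs with multiplicity $k$ on both sides, and nothing in your outline controls the induced maps on these isotypic pieces.

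What actually happens in \cite{HR} is a direct computation in the ring of supercharacters: one identifies $ds_x$ with an explicit evaluation map on supersymmetric functions and proves that its kernel is exactly the span of the Kac module supercharacters, without passing through the $\mathfrak{sl}(\infty)$-module structure or Proposition~\ref{HPS} at all. If you want to salvage your structural approach, a cleaner route is to note that $J_{m|n}/\Lambda_{m|n}$ and $J_{m-1|n-1}$ have the same (finite) Jordan--H\"older multiplicities, so it would suffice to prove that $ds_x$ is \emph{surjective}; but establishing surjectivity still requires producing enough explicit preimages, which in practice brings you back to the supercharacter computation.
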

      \begin{proof} For (1) see Lemma 32 in \cite{HPS}. For (2) see \cite{HR}.
        \end{proof}

        \section{The category $\mathcal V_t$, translation functors and categorification}
        \subsection {The Deligne category $\mathcal D_t$} In \cite{DM} Deligne and Milne constructed a family  $\{D_t=\operatorname{Rep} GL_t\mid t\in\mathbb C\}$
        of symmetric monoidal rigid categories satisfying the following properties:
        \begin{enumerate}
        \item $\mathcal D_t$ is a universal additive symmetric monoidal Karoubian category generated by a dualizable object $V_t$ of dimension $t$;
        \item The indecomposable objects of $\mathcal D_t$ are in bijection with bipartitions $\lambda=(\lambda^\bullet,\lambda^\circ)$, we denote the
          corresponding indecomposable objects by $T(\lambda)$;
        \item If $t\notin\mathbb Z$, then $\dim\Hom (T(\lambda), T(\nu))=\delta_{\lambda,\mu}$ and hence the category $\mathcal D_t$ is an abelian semisimple category;
        \item If $t\in\mathbb Z$, and $m-n=t$, then there exists a (unique up to isomorphism) symmetric monoidal functor
          $F_{m|n}: \mathcal D_t\to\operatorname{Rep}GL(m|n)$ which sends $V_t$ to $V_{m|n}$. This functor is full. 
          \end{enumerate}
          The functor $F_{m|n}$ was studied in \cite{CW}. In particular, it was computed on the indecomposable objects of $\mathcal D_t$.
          We call a bipartition $\lambda=(\lambda^\bullet,\lambda^\circ)$
          an $(m|n)$-cross if for there exists $0\leq k\leq m$ such that $\lambda^\bullet_{k+1}+(\lambda^\circ)^T_{m-k+1}\leq n$.
          Here $\mu^T$ stands for the conjugate of $\mu$.
          Denote by $C(m|n)$ the set of all $(m|n)$-crosses.
          \begin{theorem}\label{CW}
            \begin{enumerate}
              \item $F_{m|n}T(\lambda)\neq 0$ if and only if $\lambda\in C(m|n)$.
              \item The set $\{F_{m|n}T(\lambda)\mid \lambda\in C(m|n)\}$ is a complete set of pairwise non-isomorphic indecomposable direct summands in
                tensor powers $V_{m|n}^{\otimes p}\otimes (V_{m|n}^*)^{\otimes q}$ for $p,q\geq 0$.
              \end{enumerate}
            \end{theorem}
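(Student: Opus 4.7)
The plan is to use fullness of $F_{m|n}$ for (2) via a standard idempotent-lifting argument, and to identify the surviving $T(\lambda)$ in (1) via a highest-weight analysis matching the combinatorics of $(m|n)$-crosses.

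For (2), since $F_{m|n}$ is symmetric monoidal with $F_{m|n}V_t = V_{m|n}$, it sends mixed tensor powers to mixed tensor powers, and fullness yields a surjection of finite-dimensional $\mathbb{C}$-algebras
\[
A := \End_{\mathcal D_t}\bigl(V_t^{\otimes p} \otimes (V_t^*)^{\otimes q}\bigr) \twoheadrightarrow B := \End_{GL(m|n)}\bigl(V_{m|n}^{\otimes p} \otimes (V_{m|n}^*)^{\otimes q}\bigr).
\]
Primitive idempotents lift across such a surjection, so every indecomposable summand of $V_{m|n}^{\otimes p} \otimes (V_{m|n}^*)^{\otimes q}$ is the image of some $T(\lambda) \subset V_t^{\otimes p} \otimes (V_t^*)^{\otimes q}$ under $F_{m|n}$, and the indecomposables of this shape exhaust the summands as $p,q$ vary.

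For (1), I would identify the $\mathfrak{gl}(m|n)$-highest weight of $F_{m|n}T(\lambda)$ (when non-zero) through a highest-weight vector of mixed tensor type inside $V_{m|n}^{\otimes p} \otimes (V_{m|n}^*)^{\otimes q}$. The cross condition $\lambda^\bullet_{k+1} + (\lambda^\circ)^T_{m-k+1} \leq n$ is exactly the combinatorial criterion for the bipartition $(\lambda^\bullet,\lambda^\circ)$ to encode a dominant integral weight of $\mathfrak{gl}(m|n)$ via the Brundan parametrization, by arranging $\lambda^\bullet$ (respectively $\lambda^\circ$) inside a cross-shaped region around the $m \times n$ rectangle representing the trivial weight. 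When this fails, no such weight exists and one verifies that the primitive idempotent defining $T(\lambda)$ is annihilated by the map $A \to B$; when it holds, one exhibits a highest-weight vector of the expected weight as the image under that idempotent of an explicit mixed tensor, showing $F_{m|n}T(\lambda) \neq 0$. Pairwise non-isomorphism among the surviving $F_{m|n}T(\lambda)$ then follows because distinct crosses produce distinct $\mathfrak{gl}(m|n)$-highest weights by construction.

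The main obstacle is the vanishing half of (1): showing that the primitive idempotent defining $T(\lambda)$ actually lies in $\Ker(A \twoheadrightarrow B)$ when $\lambda \notin C(m|n)$. This requires either a direct analysis of the primitive idempotents of the walled Brauer algebra at parameter $t = m-n$ using its cellular structure, or an indirect dimension-counting argument matching the total number of indecomposable summands of mixed tensor powers of $V_{m|n}$ (known from the Lie-superalgebraic classification of \cite{B}) with the cardinality of $C(m|n)$ restricted by $|\lambda^\bullet|\leq p$, $|\lambda^\circ|\leq q$; this is precisely the content of the Comes--Wilson computation.
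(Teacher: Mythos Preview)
The paper does not actually prove this theorem: its entire proof consists of the two sentences citing Theorem~8.7.6 and Theorem~4.7.1 of \cite{CW}. There is therefore no argument in the paper to compare your proposal against.

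That said, your sketch is a reasonable outline of what lies behind the cited result. The idempotent-lifting argument for (2) is correct and standard: fullness of $F_{m|n}$ makes the map of endomorphism algebras surjective, and since $\End_{\mathcal D_t}T(\lambda)$ is local its image is local or zero, so $F_{m|n}T(\lambda)$ is indecomposable or zero; lifting primitive idempotents from $B$ then gives completeness. For (1) you correctly isolate the vanishing direction as the nontrivial step and correctly identify the two available routes (cellular structure of the walled Brauer algebra, or a counting argument). One small caution on the non-vanishing direction: exhibiting a highest-weight vector in the mixed tensor space of the right weight is not by itself enough; you must also check that the specific primitive idempotent cutting out $T(\lambda)$ does not annihilate it, which in practice again requires the Comes--Wilson combinatorics. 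Your closing remark that ``this is precisely the content of the Comes--Wilson computation'' is accurate, and is in effect what the paper's proof says as well.
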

            \begin{proof} The first statement is Theorem 8.7.6 in \cite{CW} and the second is the particular case of Theorem 4.7.1 in \cite{CW}.
            \end{proof}
            \subsection{The abelian envelope of $\mathcal D_t$} Let $t\in\mathbb Z$. Then $\mathcal D_t$ is not abelian. In \cite{EHS} we construct an abelian envelope
            $\mathcal V_t$ of $\mathcal D_t$. We need here some particular features of this construction. Let $m-n=t$ and let
            $\operatorname{Rep}^kGL(m|n)$ be the abelian full subcategory of $\operatorname{Rep}GL(m|n)$ containing mixed tensor powers
            $V_{m|n}^{\otimes p}\otimes (V_{m|n}^*)^{\otimes q}$ for $p,q\leq k$. The following statement is crucial for our construction.
            \begin{lemma}\label{EHS} Let $m,n>> k$ and $x\in \mathfrak{gl}(m|n)_{\bar 1}$ be a self-commuting element of rank $1$. Then the restriction of
              $DS_x$ to $\operatorname{Rep}^kGL(m|n)$ defines an equivalence of the categories
              $\operatorname{Rep}^kGL(m|n)\to\operatorname{Rep}^kGL(m-1|n-1)$.
            \end{lemma}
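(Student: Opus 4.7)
The plan is to establish that $DS_x$ restricted to $\operatorname{Rep}^kGL(m|n)$ is an exact, fully faithful, essentially surjective functor to $\operatorname{Rep}^kGL(m-1|n-1)$, combining the universal property of $\mathcal D_t$ with stabilization phenomena in the range $m,n\gg k$.

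\textbf{Matching indecomposables.} First, $DS_x$ is symmetric monoidal with $DS_x(V_{m|n})\simeq V_{m-1|n-1}$, so it sends mixed tensor powers of degree $\leq k$ to the analogous objects for $GL(m-1|n-1)$, and hence maps $\operatorname{Rep}^kGL(m|n)$ into $\operatorname{Rep}^kGL(m-1|n-1)$. To pin down its action on indecomposable summands I would observe that $DS_x\circ F_{m|n}$ and $F_{m-1|n-1}$ are both symmetric monoidal functors from $\mathcal D_t$ sending $V_t$ to $V_{m-1|n-1}$, a dualizable object of dimension $t=(m-1)-(n-1)$; by the universal property of $\mathcal D_t$ recalled in the paper, they are naturally isomorphic. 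Thus $DS_x\bigl(F_{m|n}T(\lambda)\bigr)\simeq F_{m-1|n-1}T(\lambda)$ for every bipartition $\lambda$. For $m,n\gg k$, any bipartition with $|\lambda^\bullet|,|\lambda^\circ|\leq k$ is automatically an $(m|n)$- and an $(m-1|n-1)$-cross, so Theorem \ref{CW}(2) yields that $DS_x$ bijects the indecomposable summands of mixed tensor powers of degree $\leq k$ on the two sides.

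\textbf{Matching morphisms.} In the stable range, Hom spaces between mixed tensor powers of bounded degree stabilize: for $m,n\gg k$, the restriction of $F_{m|n}$ to the subcategory of $\mathcal D_t$ spanned by objects of tensor degree $\leq k$ is fully faithful (the classical stable-range statement of Schur--Weyl duality for $GL(m|n)$), and likewise for $F_{m-1|n-1}$. Combined with the natural isomorphism above, this yields $\Hom(X,Y)\xrightarrow{\sim}\Hom(DS_xX,DS_xY)$ for all $X,Y$ in the Karoubian subcategory of indecomposable summands. Since those summands are exactly the indecomposable projective covers of the simples appearing in $\operatorname{Rep}^kGL(m|n)$, $DS_x$ is already a Morita-style equivalence on the projective level, and essential surjectivity is then automatic from the fact that the target abelian subcategory is generated by these projectives.

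\textbf{Main obstacle: exactness.} The hard step is upgrading this to an equivalence of abelian categories, namely showing $DS_x$ is exact on $\operatorname{Rep}^kGL(m|n)$. Lemma \ref{Hinich} says the failure of exactness is measured by a single error object $E\in\operatorname{Rep}GL(m-1|n-1)$ (with $E'\simeq E\otimes\CC^{0|1}$). I would argue $E=0$ for every short exact sequence inside $\operatorname{Rep}^kGL(m|n)$ by choosing projective resolutions by the indecomposables $F_{m|n}T(\lambda)$ and reducing to exactness on such resolutions, where the $DS_x$-image has been pinned down explicitly by the previous steps; the combinatorics of indecomposable summands from Theorem \ref{CW} then leaves no room for a nontrivial $E$. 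Once exactness is secured, combining it with the fully faithful projective-level equivalence and essential surjectivity delivers the claimed equivalence of abelian categories.
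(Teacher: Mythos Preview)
The paper does not prove this lemma at all: it is stated without proof, and the label \texttt{EHS} signals that it is quoted from \cite{EHS}, where the construction of $\mathcal V_t$ as an inverse limit is carried out. So there is no in-paper argument to compare against; your proposal should be assessed as a standalone attempt.

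Your first two steps are sound. The identification $DS_x\circ F_{m|n}\simeq F_{m-1|n-1}$ via the universal property, the stable-range full faithfulness of $F_{m|n}$ on mixed tensors of bounded degree, and the resulting bijection on indecomposable summands are all correct and are indeed the natural ingredients.

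There is, however, a genuine gap in the last step. You assert that the indecomposable summands $F_{m|n}T(\lambda)$ are ``exactly the indecomposable projective covers of the simples appearing in $\operatorname{Rep}^kGL(m|n)$''; this is not automatic. They are tilting in the full category, and showing they become projective (or injective) inside the truncated abelian subcategory is itself a nontrivial part of the statement you are trying to prove. More seriously, your exactness argument is circular. You propose to ``choose projective resolutions by the indecomposables $F_{m|n}T(\lambda)$ and reduce to exactness on such resolutions,'' but a functor that is not known to be exact does not interact predictably with resolutions: applying $DS_x$ to a projective resolution of $M$ need not compute $DS_xM$, and Lemma~\ref{Hinich} by itself gives you no handle on the error term $E$ from knowledge of $DS_x$ on projectives alone. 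The sentence ``the combinatorics of indecomposable summands from Theorem~\ref{CW} then leaves no room for a nontrivial $E$'' is not an argument.

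What actually makes the result work in \cite{EHS} is a direct analysis showing that, in the stable range, $DS_x$ sends simple objects of $\operatorname{Rep}^kGL(m|n)$ to simple objects of $\operatorname{Rep}^kGL(m-1|n-1)$ (not merely tiltings to tiltings), and that the highest-weight structures on the two truncated categories match under $DS_x$. Exactness then follows because once simples go to simples bijectively and the standard/costandard filtrations are preserved, the Hinich error term $E$ is forced to vanish on every short exact sequence in the subcategory. You would need to supply that simple-to-simple statement, or an equivalent substitute, to close the gap.
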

            That allows us to define the abelian category $\mathcal V^k_t$ as the inverse limit $\displaystyle\lim_{\leftarrow}\operatorname{Rep}^kGL(m|n)$.
            Then set
            $$\mathcal V_t:=\lim_{\rightarrow}\mathcal V^k_t.$$
            We have an exact fully faithful functor $I:\mathcal D_t \to\mathcal V_t$. Slightly abusing notation we write $T(\lambda)=IT(\lambda)$.
            \begin{lemma}\label{functor} For every $(m|n)$ such that $m-n=t$ there exists a symmetric monoidal functor
              $DS_{m|n}:\mathcal V_t\to \operatorname{Rep}GL(m|n)$. This functor is not exact but satisfies the condition of Lemma \ref{Hinich}.
              Moreover, $DS_{m|n}\circ I$ is isomorphic to $F_{m|n}$.
            \end{lemma}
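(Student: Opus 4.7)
The plan is to realize $DS_{m|n}$ as an iteration of ordinary $DS_x$ functors applied after lifting to a sufficiently large supergroup. Fix $k$ and choose $(M,N)$ with $M-N=t$, $M\geq m$, $N\geq n$, and $M,N\gg k$. By construction of $\mathcal V_t^k$ as the inverse limit of the tower of equivalences from Lemma \ref{EHS}, the projection
\[
\pi^k_{M|N}:\mathcal V_t^k\to\operatorname{Rep}^k GL(M|N)
\]
is a symmetric monoidal equivalence. Set
\[
DS^k_{m|n}:=DS_{x_{M-m}}\circ\cdots\circ DS_{x_1}\circ\pi^k_{M|N},
\]
where the $x_i$ are rank one self-commuting odd elements at each step of the chain $GL(M|N)\supset GL(M-1|N-1)\supset\cdots\supset GL(m|n)$. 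Two different choices $(M,N)$ and $(M',N')$ (both $\gg k$) give naturally isomorphic functors, because the additional applications of $DS_x$ between them are themselves equivalences by Lemma \ref{EHS} that exactly match the transition maps in the inverse limit defining $\mathcal V_t^k$. Compatibility with the embeddings $\mathcal V_t^k\hookrightarrow\mathcal V_t^{k+1}$ follows from naturality, so the $DS^k_{m|n}$ assemble into $DS_{m|n}:\mathcal V_t\to\operatorname{Rep}GL(m|n)$ after taking colimits in $k$, using that every object of $\operatorname{Rep}GL(m|n)$ lies in $\operatorname{Rep}^k GL(m|n)$ for some $k$.

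Symmetric monoidality of $DS_{m|n}$ is automatic, since it is a composition of symmetric monoidal functors ($\pi^k_{M|N}$ is such, and each $DS_x$ is such by \cite{DS}). For the Hinich condition I would prove by induction on $r$ that any composition $DS_{y_r}\circ\cdots\circ DS_{y_1}$ satisfies the property of Lemma \ref{Hinich}. The base case $r=1$ is Lemma \ref{Hinich}. For the inductive step, given a short exact sequence $0\to N\to M\to K\to 0$, split the six-term exact sequence produced by the first $DS_{y_1}$ into two short exact sequences, apply the inductive hypothesis to each, and splice. The key identity that makes the splicing go through is $DS_x(\mathbb C^{0|1})\simeq\mathbb C^{0|1}$, together with the symmetric monoidality of $DS_x$, which jointly ensure that tensoring with $\mathbb C^{0|1}$ commutes up to canonical isomorphism with applying $DS_x$, so that the outer error terms transform compatibly through the iteration.

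For the final assertion $DS_{m|n}\circ I\simeq F_{m|n}$, observe that $I:\mathcal D_t\to\mathcal V_t$ sends the generator $V_t$ to the object in $\mathcal V_t$ whose image under $\pi^k_{M|N}$ is $V_{M|N}$. Since $DS_x$ sends the standard module $V_{M|N}$ to $V_{M-1|N-1}$, iteration gives a canonical identification $DS_{m|n}(I(V_t))\simeq V_{m|n}=F_{m|n}(V_t)$. By the universal property of $\mathcal D_t$ as the universal additive symmetric monoidal Karoubian category generated by a dualizable object of dimension $t$ (item (1) in the description preceding Theorem \ref{CW}), any two symmetric monoidal functors out of $\mathcal D_t$ that agree on $V_t$ are naturally isomorphic, whence $DS_{m|n}\circ I\simeq F_{m|n}$.

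The main technical difficulty lies in the iterated Hinich step: carefully tracking kernels, cokernels, and the $\mathbb C^{0|1}$ parity shift through the splicing of six-term exact sequences across multiple applications of $DS_x$ requires some nontrivial diagram chasing, since the error term produced by each $DS_x$ feeds in a twisted way into the next.
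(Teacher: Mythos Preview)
Your approach is essentially identical to the paper's: identify $\mathcal V_t^k$ with $\operatorname{Rep}^k GL(m'|n')$ for large $m',n'$, compose rank-one $DS_x$ functors down to $GL(m|n)$, invoke Lemma \ref{EHS} for independence of the choices and compatibility with the direct limit, and appeal to the universal property of $\mathcal D_t$ for $DS_{m|n}\circ I\simeq F_{m|n}$. The only difference is that the paper dispatches the iterated Hinich property with the single phrase ``by construction'', whereas you sketch (and correctly flag as the delicate point) the inductive splicing argument.
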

            \begin{proof} It suffices to construct $DS_{m|n}:\mathcal V_t^k\to  \operatorname{Rep}GL(m|n)$. We identify $V_t^k$ with  $\operatorname{Rep}^kGL(m'|n')$
              for sufficiently large $m',n'$ and define $DS_{m|n}:\operatorname{Rep}^kGL(m'|n')\to \operatorname{Rep}^kGL(m|n)$ as a composition
              of the functors $DS_{x_r}\circ DS_{x_{r-1}}\circ\dots DS_{x_1}$ for some self-commuting rank $1$ odd elements $x_i\in \mathfrak{gl}(m+i|n+i)$
                with $r=m'-m=n'-n$. Lemma \ref{EHS} ensures that this composition does not depend on
              the choice of $(m'|n')$ and that passing to the direct limit is well-defined. By construction $DS_{m|n}$ satisfies Lemma \ref{Hinich}.
              Finally, $DS_{m|n}\circ I$ is a symmetric monoidal functor from $\mathcal D_t$ to $\operatorname{Rep}^kGL(m|n)$ which maps $V_t$ to $V_{m|n}$.
              Hence by (4) it must be isomorphic to $F_{m|n}$.
              \end{proof}
              \begin{remark}\label{unique?} Construction of  $DS_{m|n}$ given in the above proof depends on a choice of $x_s\in\mathfrak{gl}(m+s,n+s)_{\bar 1}$.
                Apriori there may be several non-isomorphic functors satisfying the condition of Lemma \ref{functor}. We suspect however that all these functors
                are isomorphic. Anyway as follows from the proof we can choose the sequence $DS_{m|n}$ so that
                $DS_{m-1|n-1}=DS_x\circ DS_{m|n}$ for some $x\in\mathfrak{gl}(m|n)_{\bar 1}$. Note that $DS_{m|n}T(\lambda)\simeq F_{m|n}T(\lambda)$, hence
                on tilting objects the image of $DS_{m|n}$ does not depend on the choice of $x_s$. Furthermore,
                $DS_{m|n}$ defines a homomorphism $ds_{m|n}: K[\mathcal V_t]_{\mathbb C}\to J_{m|n}$ which does not depend on a choice of $x_s$.
                \end{remark}

                \subsection{Objects of $\mathcal V_t$} There are three types of objects in $\mathcal V_t$ enumerated by bipartitions:
                \begin{itemize}
                \item Simple objects $L(\lambda)$, after identification with of $\mathcal V_t^k$ with $\Rep^kGL(m|n)$ the highest weight of the
                  corresponding representation is $\sum\lambda_i^\bullet\varepsilon_i-\sum (\lambda_i^\circ)\delta_i$ for the following set of simple roots
                  $\mathfrak{gl}(m|n)$: $\varepsilon_1-\varepsilon_2,\dots,\varepsilon_m-\delta_n,\delta_n-\delta_{n-1},\dots.\delta_2-\delta_1$.
                \item Standard objects $V(\lambda)$, those are maximal quotients of the  Kac modules lying in $\Rep^kGL(m|n)$. They can be described as images
                    of the irreducible module in $\Rep \mathfrak{gl}(\infty)$, see \cite{EHS}.
                    \item Indecomposable tilting objects $T(\lambda)$.
                    \end{itemize}

                    It is proven in \cite{EHS} that for every $k\geq 0$ the abelian category $\mathcal V^k_t$ is a highest weight category. Moreover, simple
                    standard and tilting objects do not depend on $k$ as soon as $k$ is sufficiently large.
                    In particular, $T(\lambda)$ has a filtration by $V(\mu)$ with the property:
\begin{equation}\label{mult}
  (T(\lambda):V(\lambda))=1,\quad(T(\lambda):V(\mu))\neq 0 \Rightarrow \lambda=\mu\,\,\text{or}\,\,\mu\subset\lambda.
\end{equation}
Here we say $\mu\subset\lambda$ if $\mu^\bullet$ is contained in $\lambda^\bullet$ and $\mu^\circ$ is contained in $\lambda^\circ$.
Furthermore, there is an interesting reciprocity, \cite{E}:
\begin{equation}\label{reciprocity}
  (T(\lambda):V(\mu))=[V(\lambda):L(\mu)].
  \end{equation}
                    It is shown in \cite{EHS} that $[V(\lambda):L(\mu)]\leq 1$. In \cite{E} all pairs $(\lambda,\mu)$ for which $[V(\lambda):L(\mu)]=1$
                    are described in terms of weight diagrams.
                    \begin{lemma}\label{grothgroup} All three sets
                      $\{[L(\lambda]\}$, $\{[V(\lambda)]\}$ and $\{[T(\lambda)]\}$ are bases  in the Grothendieck group $K[\mathcal V_t]$.
                      Furthermore there exists $K(\lambda,\mu)=0,1$ such that
                      $$[V(\lambda)]=\sum_{\mu\subset\lambda}K(\lambda,\mu)[L(\mu)],\quad [T(\lambda)]=\sum_{\mu\subset\lambda}K(\lambda,\mu)[V(\mu)].$$
                    \end{lemma}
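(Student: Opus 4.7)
The plan is to read off all three assertions from the highest-weight structure of $\mathcal{V}_t^k$ together with the reciprocity \eqref{reciprocity} and the multiplicity-one statement from \cite{EHS}. Since $\mathcal{V}_t=\limarr\mathcal{V}_t^k$ and the classes of simple, standard and tilting objects in $\mathcal{V}_t^k$ stabilize as $k\to\infty$, it is enough to argue inside a single $\mathcal{V}_t^k$ and then pass to the limit, observing that each bipartition $\lambda$ has only finitely many $\mu$ with $\mu\subset\lambda$, so all sums below are finite.

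First I would verify that $\{[L(\lambda)]\}$ is a basis of $K[\mathcal{V}_t]$. This is immediate from the fact that every object of the abelian category $\mathcal{V}_t^k$ has finite length (a consequence of it being a highest weight category, \cite{EHS}), so the simples form a basis of $K[\mathcal{V}_t^k]$ for every $k$, and hence of the colimit.

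Next I would define
\[
K(\lambda,\mu):=[V(\lambda):L(\mu)].
\]
By \cite{EHS} one has $[V(\lambda):L(\mu)]\in\{0,1\}$, with $K(\lambda,\lambda)=1$ since $L(\lambda)$ is the simple head of $V(\lambda)$, and by the description of composition factors of standard objects in a highest weight category $K(\lambda,\mu)\neq 0\Rightarrow\mu\subset\lambda$. Therefore
\[
[V(\lambda)]=\sum_{\mu\subset\lambda}K(\lambda,\mu)[L(\mu)],
\]
and the transition matrix from $\{[V(\lambda)]\}$ to $\{[L(\lambda)]\}$ is unitriangular with respect to the partial order $\subset$ (which is well-founded on bipartitions). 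Hence $\{[V(\lambda)]\}$ is also a basis of $K[\mathcal{V}_t]$.

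Finally, the filtration property \eqref{mult} gives
\[
[T(\lambda)]=\sum_{\mu\subset\lambda}(T(\lambda):V(\mu))\,[V(\mu)],
\]
with $(T(\lambda):V(\lambda))=1$, so by the same triangularity argument $\{[T(\lambda)]\}$ is a basis of $K[\mathcal{V}_t]$. The reciprocity \eqref{reciprocity} now yields $(T(\lambda):V(\mu))=[V(\lambda):L(\mu)]=K(\lambda,\mu)$, giving the second displayed formula with the same coefficients as the first. There is no real obstacle here; the only point requiring care is the passage from finite-length statements inside each $\mathcal{V}_t^k$ to $K[\mathcal{V}_t]$, which is handled by the stabilization of $V(\lambda)$, $L(\lambda)$, $T(\lambda)$ in $k$ noted in \cite{EHS}.
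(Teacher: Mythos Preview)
Your proposal is correct and follows essentially the same route as the paper: both define $K(\lambda,\mu)=[V(\lambda):L(\mu)]$, invoke the filtration property \eqref{mult} and the reciprocity \eqref{reciprocity} together with the multiplicity-one result from \cite{EHS}, and then use unitriangularity of the transition matrix (the paper phrases this via the rank function $|\lambda^\bullet|+|\lambda^\circ|$, you via well-foundedness of $\subset$) to conclude that all three families are bases. Your version simply spells out more explicitly that $\{[L(\lambda)]\}$ is a basis by finite length and handles the passage to the colimit, points the paper leaves implicit.
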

                    \begin{proof} The second assertion is a consequence of (\ref{mult}) and (\ref{reciprocity}). The first assertion follows fom the fact that
                      $K(\lambda,\mu)$ is upper triangular matrix with respect to the partial order with rank function $|\lambda^\circ|+|\lambda^\bullet|$, and
                      $K(\lambda,\mu)$ has $1$-s on the main diagonal.
                      \end{proof}
                    \subsection{Translation functors and categorical action of $\mathfrak{sl}(\infty)$} One readily sees that $\mathfrak{gl}(V_t):=V_t\otimes V^*_t$
                    is a Lie algebra object in $\mathcal V_t$. Furthermore, there exists a unique canonical morphism
                    $\omega:\mathbf{1}\to \mathfrak{gl}(V_t)$. For every $X\in\mathcal V_t$ we do have the action morphism
                    $a_X:\mathfrak{gl}(V_t)\otimes X\to X$. Hence in the same way as for $\Rep GL(m|n)$ we can define the translation functors $E_aX$ and $F_aX$
                    as generalized eigenspaces with eigenvalue $a$ for
                    $$\Omega: X\otimes V_t\xrightarrow{\omega\otimes\id}\mathfrak{gl}(V_t)\otimes \mathfrak{gl}(V_t) \otimes X\otimes V_{t}
\xrightarrow{\id\otimes s\otimes\id}$$
$$ \mathfrak{gl}(V_t)\otimes X\otimes\mathfrak{gl}(V_t)\otimes V_{t}\xrightarrow{a_X\otimes a_{V_{t}}} X\otimes V_{t}
  $$
  and  $$\Omega': X\otimes V^*_t\xrightarrow{\omega\otimes\id}\mathfrak{gl}(V_t)\otimes \mathfrak{gl}(V_t) \otimes X\otimes V^*_{t}
\xrightarrow{\id\otimes s\otimes\id}$$
$$ \mathfrak{gl}(V_t)\otimes X\otimes\mathfrak{gl}(V_t)\otimes V_{t}^*\xrightarrow{a_X\otimes a_{V^*_{t}}} X\otimes V^*_{t},$$
respectively.

The following theorem is proven in \cite{E}

\begin{theorem}\label{E} Let $t\in\mathbb Z$.
  \begin{enumerate}
  \item $E_a$, $F_a$ are non-zero only for $a\in\mathbb Z$;
  \item $E_a$, $F_a$ are biadjoint exact endofunctors of $\mathcal V_t$;
  \item Let $e_a,f_a: K[\mathcal V_t]_{\mathbb C}\to K[\mathcal V_t]_{\mathbb C}$  be the induced $\mathbb C$-linear maps.
    Then $e_a,f_a$ satisfy the Chevalley-Serre relations for $A_{\infty}$.
    Hence $K[\mathcal V_t]_{\mathbb C}$ is an $\mathfrak{sl}(\infty)$-module.
  \item There is a unique isomorphism $f:K[\mathcal V_t]_{\mathbb C}\to \mathfrak F^\vee_t\otimes\mathfrak F$ of $\mathfrak{sl}(\infty)$-modules such that
    $f([V(\lambda)])=v_\lambda:=w_{\lambda^\bullet}\otimes  u_{\lambda^\circ}$.
  \end{enumerate} 
\end{theorem}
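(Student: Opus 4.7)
The plan is to reduce each statement to its already-established counterpart for $\Rep GL(m|n)$, using the symmetric monoidal functors $DS_{m|n}$ of Lemma \ref{functor} as the bridge. For (1) and (2), I first observe that in the rigid category $\mathcal V_t$ the functor $-\otimes V_t$ is exact and biadjoint to $-\otimes V_t^*$, so once $\Omega$ is shown to have integer spectrum with summable generalized eigenspaces the translation functors $E_a,F_a$ are automatically exact, and $F_a$ is biadjoint to $E_a$. To control the spectrum I use that each $DS_{m|n}$ preserves the canonical morphism $\omega$ and the braiding $s$, whence $DS_{m|n}(\Omega_X)=\Omega_{DS_{m|n}(X)}$. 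Any $X\in\mathcal V_t$ lies in some $\mathcal V_t^k$, and for $m,n$ sufficiently large relative to $k$ the restriction of $DS_{m|n}$ to $\mathcal V_t^k$ is an equivalence onto $\Rep^k GL(m|n)$ by Lemma \ref{EHS}; hence the eigenspace decomposition of $\Omega_X$ matches that of $\Omega_{DS_{m|n}(X)}$, and Theorem \ref{B}(1,2) yields (1) and (2).

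For (3), the Chevalley--Serre relations are identities in $K[\mathcal V_t]_{\mathbb C}$ which I verify on the basis $\{[V(\lambda)]\}$ of Lemma \ref{grothgroup}. Each $DS_{m|n}$ descends to a linear map $ds_{m|n}:K[\mathcal V_t]_{\mathbb C}\to J_{m|n}$ commuting with $e_a$ and $f_a$, by monoidality together with Lemma \ref{HR}(1). The family $\{ds_{m|n}\}_{m,n}$ is jointly injective: any class supported in $K[\mathcal V_t^k]_{\mathbb C}$ is detected by $ds_{m|n}$ for $m,n$ large relative to $k$, again by Lemma \ref{EHS}. Since Theorem \ref{B}(3) provides the relations inside each $J_{m|n}$, they lift to $K[\mathcal V_t]_{\mathbb C}$.

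For (4) I define $f$ on the basis by $f([V(\lambda)])=v_\lambda$; this is a linear isomorphism because both $\{[V(\lambda)]\}$ (by Lemma \ref{grothgroup}) and $\{v_\lambda=w_{\lambda^\bullet}\otimes u_{\lambda^\circ}\}$ (by construction) are bases. To verify that $f$ intertwines the $\mathfrak{sl}(\infty)$-actions I transport $V(\lambda)$ through the equivalence $\mathcal V_t^k\simeq \Rep^k GL(m|n)$ with $m-n=t$ and $m,n$ large relative to $k$; there $V(\lambda)$ corresponds to a Kac module of highest weight $\sum\lambda_i^\bullet\varepsilon_i-\sum\lambda_i^\circ\delta_i$, and the action of $E_a$ on $[V(\lambda)]$ is computed by Brundan's rule from \cite{B}. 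The main obstacle is the combinatorial matching: showing that this rule is exactly the Leibniz expansion of $e_a(w_{\lambda^\bullet}\otimes u_{\lambda^\circ})$, in which box-adding on the $\lambda^\bullet$-side versus the $\lambda^\circ$-side corresponds to the two Leibniz summands and the content of the box matches the index $a$. This is cleanest in the weight-diagram language of \cite{B}. Once the match is in place, the same argument handles $F_a$, and uniqueness of $f$ is immediate.
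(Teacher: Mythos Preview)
The paper does not prove this theorem: the sentence immediately preceding it reads ``The following theorem is proven in \cite{E}'', and no argument is supplied. So there is no in-paper proof to compare your proposal against; the result is imported wholesale from Entova-Aizenbud's paper.

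That said, your outline for (1)--(3) is sound and is essentially the natural reduction: the stabilization equivalence of Lemma~\ref{EHS} lets you read off the spectrum of $\Omega$ and the exactness/biadjointness from Theorem~\ref{B}, and the joint injectivity of the maps $ds_{m|n}$ (which the paper later records as Lemma~\ref{complete}) transports the Chevalley--Serre relations back from the $J_{m|n}$. For (4), however, your key step is not justified as written. You assert that under the equivalence $\mathcal V_t^k\simeq\Rep^k GL(m|n)$ the standard object $V(\lambda)$ ``corresponds to a Kac module'', and then invoke Brundan's formula for $e_a[K(\mu)]$. But the paper explicitly defines $V(\lambda)$ as the \emph{maximal quotient} of the Kac module lying in $\Rep^k GL(m|n)$, not the Kac module itself; as $m,n$ grow the Kac module acquires more and more composition factors, and it is not obvious that it sits inside $\Rep^k$. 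One needs an argument that either $V(\lambda)=K(\mu)$ in a suitable stable range, or that the action of $E_a,F_a$ on $[V(\lambda)]$ nonetheless obeys Brundan's combinatorics for the Kac basis. This is exactly the content worked out in \cite{E}, so your sketch points in the right direction, but the passage from $V(\lambda)$ to a Kac module is an assertion that needs proof, not a triviality.
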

\section{Proof of the main theorem}
Recall the functor $DS_{m|n}$ defined in Lemma \ref{functor}.
\begin{lemma}\label{commuting} We have the following commutative diagrams of functors:
\begin{equation}
\begin{CD}
\mathcal V_t @>{E_a(F_a)}>>  \mathcal V_t\\
 @V{DS_{m|n}}VV @V{DS_{m|n}}VV \\
 \Rep GL(m|n)@>{E_a(F_a)}>> \Rep GL(m|n) 
\end{CD}\notag
\end{equation}
\end{lemma}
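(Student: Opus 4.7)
The plan is to exploit the fact that both sides of each square are constructed from the same categorical recipe — the Casimir $\omega$, the symmetric braiding $s$, and canonical action morphisms — and that $DS_{m|n}$, being a symmetric monoidal functor sending $V_t$ to $V_{m|n}$ (Lemma \ref{functor}), preserves every piece of this data. It follows that $DS_{m|n}$ intertwines the endomorphism $\Omega$ of $X\otimes V_t$ in $\mathcal V_t$ with the endomorphism $\Omega$ of $DS_{m|n}(X)\otimes V_{m|n}$ in $\Rep GL(m|n)$, and likewise $\Omega'$ with $\Omega'$.

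First I would unpack that the canonical morphism $\omega$ (the categorical Casimir) is built from the coevaluation $\mathbf{1}\to V_t\otimes V_t^*$ together with the rigid/symmetric structure, so it is sent by $DS_{m|n}$ to the analogous Casimir in $\Rep GL(m|n)$. The action morphisms $a_X$, $a_{V_t}$, $a_{V^*_t}$ are similarly canonical in a rigid symmetric monoidal category (coming from evaluation and the Lie algebra structure on $V_t\otimes V_t^*$), hence preserved by $DS_{m|n}$. Assembling these observations and using the monoidal structure $DS_{m|n}(X\otimes V_t)\simeq DS_{m|n}(X)\otimes V_{m|n}$ together with compatibility with $s$, one obtains a commutative square
$$\begin{CD}
DS_{m|n}(X\otimes V_t) @>{DS_{m|n}(\Omega)}>> DS_{m|n}(X\otimes V_t)\\
@V{\simeq}VV @V{\simeq}VV\\
DS_{m|n}(X)\otimes V_{m|n} @>{\Omega}>> DS_{m|n}(X)\otimes V_{m|n}
\end{CD}$$
and similarly for $\Omega'$ on $X\otimes V_t^*$.

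Next I would extract the generalized eigenspace decomposition. Working inside $\mathcal V_t^k$ and identifying it with $\Rep^k GL(m'|n')$ for $m',n'\gg k$ via Lemma \ref{EHS}, the endomorphism $\Omega$ on $X\otimes V_t$ satisfies a polynomial identity (inherited from the finite category $\Rep^k GL(m'|n')$), so for each integer $a$ the generalized $a$-eigenspace $E_a(X)$ is the image of an idempotent $P_a\in\End(X\otimes V_t)$ which is a polynomial in $\Omega$. Because $DS_{m|n}$ is additive, lands in the Karoubian category $\Rep GL(m|n)$, and intertwines the two copies of $\Omega$, it sends $P_a$ to the corresponding idempotent in $\End(DS_{m|n}(X)\otimes V_{m|n})$ and hence its image to the image. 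This yields natural isomorphisms $DS_{m|n}(E_a X)\simeq E_a(DS_{m|n}X)$ and, by the parallel argument with $V_t^*$, also $DS_{m|n}(F_a X)\simeq F_a(DS_{m|n}X)$.

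The main obstacle I anticipate is verifying cleanly that the Casimir and action morphisms built categorically inside $\mathcal V_t$ really do go to the classical ones under $DS_{m|n}$. A convenient bypass is to invoke the concrete description of $DS_{m|n}$ as a composition of ordinary $DS_x$-functors (Remark \ref{unique?} together with the proof of Lemma \ref{functor}) and apply Lemma \ref{HR}(1) iteratively: each $DS_x$ commutes with $E_a$ and $F_a$ on $\Rep^k GL(m+s|n+s)$, so the composition does as well, and passing to the direct limit over $k$ transfers the commutativity to all of $\mathcal V_t$.
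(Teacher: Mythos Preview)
Your bypass at the end is exactly the paper's proof: invoke Lemma~\ref{HR}(1) to get the commuting square for each $DS_x$, then use the construction of $DS_{m|n}$ as a composition of such $DS_x$'s (proof of Lemma~\ref{functor}) together with the definition of $\mathcal V_t$ as a limit. The paper says nothing more than this.

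Your first, more conceptual approach is a genuinely different route. It is correct in spirit, but note that the step ``the action morphisms $a_X$ are canonical in a rigid symmetric monoidal category'' is not purely formal: $a_X$ for a general object $X$ is not built from the rigid structure alone, it records the $\mathfrak{gl}(V_t)$-module structure on $X$, which is extra data inherited from the identification $\mathcal V_t^k\simeq \Rep^k GL(m'|n')$. So to check that $DS_{m|n}$ intertwines $a_X$ with $a_{DS_{m|n}X}$ you are pushed back to the concrete description of $DS_{m|n}$ as an iterated $DS_x$, and hence essentially to Lemma~\ref{HR}(1) again. In other words, your direct argument is attractive but its hardest step collapses to the bypass, which is why the paper goes straight there.
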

\begin{proof} By Lemma \ref{HR} one has the following commutative diagram
  \begin{equation}
\begin{CD}
\Rep GL(m|n) @>{E_a(F_a)}>>  \Rep GL(m|n)\\
 @V{DS_{x}}VV @V{DS_{x}}VV \\
 \Rep GL(m-1|n-1)@>{E_a(F_a)}>> \Rep GL(m-1|n-1) 
\end{CD}\notag
\end{equation}
Hence the statement follows from definition of $\mathcal V_t$ and the proof of Lemma \ref{functor}.
\end{proof}
\begin{corollary}\label{modhom} The induced map $ds_{m|n}: K[\mathcal V_t]_{\mathbb C}\to K_{red}[\Rep GL(m|n)]_{\mathbb C}$ is a homomorphism
  of $\mathfrak{sl}(\infty)$-modules.
  \end{corollary}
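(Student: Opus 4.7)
The plan is to deduce the corollary as an essentially formal consequence of the commutative square in Lemma \ref{commuting}, combined with the fact that the translation functors on both sides are exact and therefore descend unambiguously to the Grothendieck groups.

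First I would recall why $ds_{m|n}$ is even well-defined as a homomorphism of abelian groups. The functor $DS_{m|n}$ is not exact, but by Lemma \ref{functor} it satisfies the conclusion of Lemma \ref{Hinich}: a short exact sequence $0\to N\to M\to K\to 0$ in $\mathcal V_t$ produces a six-term exact sequence in $\Rep GL(m|n)$ whose endpoints are an object $E$ and $E'\simeq E\otimes\mathbb C^{0|1}$. In the reduced Grothendieck group one has $[\mathbb C^{0|1}]=-[\mathbb C]$, so $[E]+[E']=0$ and the alternating sum of classes in the six-term sequence collapses to $[DS_{m|n}N]-[DS_{m|n}M]+[DS_{m|n}K]=0$. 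Hence the assignment $[X]\mapsto [DS_{m|n}X]$ really does factor through $K[\mathcal V_t]$ and gives a well-defined $\mathbb C$-linear map $ds_{m|n}$.

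Next I would check $\mathfrak{sl}(\infty)$-equivariance. By Theorems \ref{B} and \ref{E}, the actions of $e_a$ and $f_a$ on $K[\mathcal V_t]_{\mathbb C}$ and on $J_{m|n}$ are induced by the exact endofunctors $E_a$ and $F_a$, so for any object $X$ of $\mathcal V_t$ we have $e_a[X]=[E_aX]$ and likewise in $\Rep GL(m|n)$. Applying Lemma \ref{commuting} gives an isomorphism $DS_{m|n}(E_aX)\simeq E_a(DS_{m|n}X)$, and so
\[
ds_{m|n}(e_a[X])=ds_{m|n}([E_aX])=[DS_{m|n}(E_aX)]=[E_a(DS_{m|n}X)]=e_a\,ds_{m|n}([X]).
\]
The identical calculation with $F_a$ in place of $E_a$ handles $f_a$. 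Since the classes $[X]$ span $K[\mathcal V_t]_{\mathbb C}$, this establishes that $ds_{m|n}$ intertwines the action of every Chevalley--Serre generator, hence of all of $\mathfrak{sl}(\infty)$.

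There is essentially no obstacle; the only subtle point is the well-definedness of $ds_{m|n}$, which is already built into Lemma \ref{Hinich} together with the passage to $K_{red}$. Once that is in hand, the commutative diagram of Lemma \ref{commuting} does all the work, and the proof is just the one-line computation above.
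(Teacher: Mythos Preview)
Your proof is correct and follows exactly the approach the paper intends: the corollary is stated without proof immediately after Lemma \ref{commuting}, so the paper treats it as an immediate consequence of that commutative diagram, with well-definedness of $ds_{m|n}$ already noted in Remark \ref{unique?} via Lemma \ref{Hinich}. You have simply written out the details the paper leaves implicit.
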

  \begin{lemma}\label{tilting}
    \begin{enumerate}
    \item $ds_{m|n}([T(\lambda)])\neq 0$ if and only if $\lambda\in C(m|n)$.
    \item If $ds_{m|n}([T(\lambda)])\neq 0$ and $ds_{m-1|n-1}([T(\lambda)])=0$, then $DS_{m|n}T(\lambda)$ is projective in $\Rep GL(m|n)$.
       \item The set $\{ds_{m|n}([T(\lambda)])\mid\lambda\in C(m|n)\}$ is linearly independent in $ K_{red}[\Rep GL(m|n)]_{\mathbb C}$.
      \end{enumerate}
    \end{lemma}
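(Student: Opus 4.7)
I would prove the three parts in order, using throughout the identification $DS_{m|n}T(\lambda)\simeq F_{m|n}T(\lambda)$ (Lemma \ref{functor}, Remark \ref{unique?}) and the existence of a rank-one odd $x\in\mathfrak{gl}(m|n)_{\bar 1}$ with $DS_{m-1|n-1}\simeq DS_x\circ DS_{m|n}$ (also Remark \ref{unique?}), so that $ds_{m-1|n-1}=ds_x\circ ds_{m|n}$.

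For part (1), the ``only if'' direction follows because $\lambda\notin C(m|n)$ implies $F_{m|n}T(\lambda)=0$ by Theorem \ref{CW}(1), and hence $DS_{m|n}T(\lambda)=0$. For the ``if'' direction, when $\lambda\in C(m|n)$ the object $F_{m|n}T(\lambda)$ is a nonzero indecomposable direct summand of a mixed tensor power. Its class in $K[\Rep GL(m|n)]$ is a nonzero nonnegative combination of classes of simples; it descends to a nonzero element of $K_{red}$ because the distinguished composition factor $L(\mu_\lambda)$ of $T(\lambda)$, which occurs with multiplicity one by (\ref{mult}), contributes a simple not identified with any other constituent under the relation $[\Pi N]=-[N]$.

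For part (2), the hypothesis together with part (1) at level $(m-1|n-1)$ forces $\lambda\notin C(m-1|n-1)$, hence $F_{m-1|n-1}T(\lambda)=0$ as an \emph{object}, i.e.\ $DS_x\bigl(F_{m|n}T(\lambda)\bigr)=0$. The object $F_{m|n}T(\lambda)$ is $GL(m)\times GL(n)$-equivariant, and since the nonzero rank-one self-commuting odd elements of $\mathfrak{gl}(m|n)_{\bar 1}$ form a single $GL(m)\times GL(n)$-orbit, equivariance of the $DS$-construction propagates the vanishing $DS_x=0$ to every rank-one self-commuting odd $x'$. Any higher-rank self-commuting odd element is a sum of commuting rank-one pieces, so iterating $DS$ shows the Duflo--Serganova associated variety of $F_{m|n}T(\lambda)$ reduces to $\{0\}$; by \cite{DS} this is equivalent to projectivity in $\Rep GL(m|n)$.

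For part (3), I proceed by downward induction on $\min(m,n)$. In the base case $\min(m,n)=0$, the category $\Rep GL(m|n)$ is semisimple, each $F_{m|n}T(\lambda)$ is a simple (hence projective) module, and different $\lambda\in C(m|n)$ yield pairwise non-isomorphic simples by Theorem \ref{CW}(2), whose classes are visibly linearly independent in $K_{red}$. For the inductive step, suppose $\sum_{\lambda\in C(m|n)}c_\lambda\, ds_{m|n}([T(\lambda)])=0$, and apply $ds_x$. Using $ds_x\circ ds_{m|n}=ds_{m-1|n-1}$ together with part (1) at the lower level, the relation collapses to $\sum_{\lambda\in C(m-1|n-1)}c_\lambda\, ds_{m-1|n-1}([T(\lambda)])=0$; the inductive hypothesis forces $c_\lambda=0$ for $\lambda\in C(m-1|n-1)$. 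The remaining terms are indexed by $\lambda\in C(m|n)\setminus C(m-1|n-1)$; by part (2) each $DS_{m|n}T(\lambda)$ is an indecomposable projective, these are pairwise non-isomorphic (Theorem \ref{CW}(2)), and classes of pairwise non-isomorphic projective indecomposables are linearly independent in $K_{red}$, concluding the induction.

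The main obstacle is the implication in part (2) from vanishing of a single $DS_x$ to full projectivity; it requires both the $GL(m)\times GL(n)$-equivariance of the associated variety (to spread the vanishing from one rank-one point to the whole orbit) and the Duflo--Serganova criterion that triviality of the associated variety characterizes projectivity in $\Rep GL(m|n)$.
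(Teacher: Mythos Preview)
Your strategy matches the paper's essentially line for line: part (1) via $DS_{m|n}T(\lambda)\simeq F_{m|n}T(\lambda)$ and Theorem~\ref{CW}; part (2) via the vanishing $DS_x(DS_{m|n}T(\lambda))=0$ together with the Duflo--Serganova projectivity criterion; part (3) by the same induction through $ds_x$, peeling off $C(m-1|n-1)$ and then using that the remaining classes are those of a complete set of pairwise non-isomorphic indecomposable projectives.

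Two small inaccuracies deserve mention. In your ``if'' direction of (1), the argument about the distinguished constituent $L(\mu_\lambda)$ and equation~(\ref{mult}) is phrased in $\mathcal V_t$, whereas what must be checked is that the class of the \emph{object} $F_{m|n}T(\lambda)$ is nonzero in $K_{red}[\Rep GL(m|n)]$; this follows because $F_{m|n}T(\lambda)$ is a nonzero indecomposable (Theorem~\ref{CW}(2)) and in $\Rep GL(m|n)$ every indecomposable has all simple constituents of a single consistent parity, so no cancellation can occur under $[\Pi N]=-[N]$. In (2), your claim that the rank-one self-commuting odd elements form a single $GL(m)\times GL(n)$-orbit is false as stated: there are two such orbits, one in $\mathfrak{g}_{+1}$ and one in $\mathfrak{g}_{-1}$, and your subsequent decomposition of a higher-rank $y$ into commuting rank-one pieces only covers those $y$ whose pieces lie in the orbit of your chosen $x$. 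The paper avoids this by invoking instead that $X_P=\{y:DS_yP\neq 0\}$ is Zariski closed and $GL(m)\times GL(n)$-stable and then citing \cite{DS} directly for the implication $X_P=\{0\}\Rightarrow P$ projective; you should likewise appeal to the full result from \cite{DS} (that vanishing of $DS$ at a single rank-one point forces projectivity for $GL(m|n)$) rather than rely on the orbit count.
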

    \begin{proof} By Lemma \ref{functor} we have $DS_{m|n}T(\lambda)=F_{m|n}T(\lambda)$. Therefore (1) follows from Theorem \ref{CW} (1).
      
      To prove (2) let $P=DS_{m|n}T(\lambda)$. Then we have $DS_{m-1|n-1}T(\lambda)=DS_xP$ for any odd self-commuting $x\in\mathfrak{gl}(m|n)$ of rank $1$, see
      Remark~\ref{unique?}.
      Since the set $X_P=\{y\mid DS_yP\neq 0\}$ is Zariski closed $GL(m)\times GL(n)$-stable subset we obtain $X_P=\{0\}$ and therefore $P$ is projective, see \cite{DS}.

      Now let us prove (3) by induction on $m$. Consider a linear combination $$\sum_{\lambda\in C(m|n)} c_\lambda ds_{m|n}([T(\lambda)])=0.$$ It can be written as
      $$\sum_{\lambda\in C(m-1|n-1)} c_\lambda ds_{m|n}([T(\lambda)])+\sum_{\lambda\notin C(m-1|n-1)} c_\lambda ds_{m|n}([T(\lambda)])=0.$$
      Applying $ds_x$ we get
      $$\sum_{\lambda\in C(m-1|n-1)} c_\lambda ds_{m-1|n-1}([T(\lambda)])=0.$$
      By induction assumption we obtain $c_\lambda=0$ for all $\lambda\in C(m-1|n-1)$. On the other hand, $ds_{m|n}([T(\lambda)])$  for all
      $\lambda\in C(m|n)\setminus C(m-1|n-1)$ is the set of isomorphism classes of all indecomposable projective modules. Hence this set is linearly independent and
      all $c_\lambda=0$.
    \end{proof}
    \begin{corollary}\label{ker} The quotient $\Ker ds_{m-1|n-1}/\Ker ds_{m|n}$ is isomorphic to $S_{m|n}$ as an $\mathfrak{sl}(\infty)$-module.
    \end{corollary}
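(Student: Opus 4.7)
The plan is to identify the quotient with the image $ds_{m|n}(\Ker ds_{m-1|n-1})$ via the first isomorphism theorem and then pin down this image as the socle $S_{m|n}$ of $J_{m|n}$.

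First I would invoke Remark \ref{unique?} to choose the sequence of functors so that $DS_{m-1|n-1} = DS_x \circ DS_{m|n}$ for some self-commuting rank-$1$ odd $x \in \mathfrak{gl}(m|n)_{\bar 1}$; passing to complexified reduced Grothendieck groups yields $ds_{m-1|n-1} = ds_x \circ ds_{m|n}$, hence $\Ker ds_{m|n} \subseteq \Ker ds_{m-1|n-1}$, and the restriction of $ds_{m|n}$ to $\Ker ds_{m-1|n-1}$ has image inside $\Ker ds_x = \Lambda_{m|n}$ by Lemma \ref{HR}(2). Since $ds_{m|n}$ is an $\mathfrak{sl}(\infty)$-module homomorphism (Corollary \ref{modhom}), the first isomorphism theorem yields
$$\Ker ds_{m-1|n-1}/\Ker ds_{m|n} \simeq ds_{m|n}(\Ker ds_{m-1|n-1})$$
as $\mathfrak{sl}(\infty)$-modules, reducing the claim to identifying the right-hand side with $S_{m|n}$.

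Next I would expand elements in the tilting basis $\{[T(\lambda)]\}$ of $K[\mathcal V_t]_{\mathbb C}$ provided by Lemma \ref{grothgroup}. If $y = \sum_\lambda c_\lambda [T(\lambda)] \in \Ker ds_{m-1|n-1}$, then Lemma \ref{tilting}(1) at level $(m-1|n-1)$ discards all tilts with $\lambda \notin C(m-1|n-1)$, and Lemma \ref{tilting}(3) at that level forces $c_\lambda = 0$ for every $\lambda \in C(m-1|n-1)$. Hence $y$ is supported on tilts with $\lambda \notin C(m-1|n-1)$. Applying $ds_{m|n}$, Lemma \ref{tilting}(1) further kills $\lambda \notin C(m|n)$, leaving
$$ds_{m|n}(y) = \sum_{\lambda \in C(m|n)\setminus C(m-1|n-1)} c_\lambda \, ds_{m|n}([T(\lambda)]).$$
By Lemma \ref{tilting}(2), each $DS_{m|n}T(\lambda)$ appearing in this sum is projective in $\Rep GL(m|n)$, so $ds_{m|n}([T(\lambda)])$ lies in the span of classes of projective modules, which by Proposition \ref{HPS} is precisely $S_{m|n}$. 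Therefore $ds_{m|n}(\Ker ds_{m-1|n-1}) \subseteq S_{m|n}$.

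To close the argument I would exhibit a non-zero element by taking the bipartition $\lambda$ with $\lambda^\bullet$ equal to the $m \times n$ rectangle and $\lambda^\circ = \emptyset$; a direct check of the defining inequality shows $\lambda \in C(m|n) \setminus C(m-1|n-1)$, so $[T(\lambda)] \in \Ker ds_{m-1|n-1}$ while $ds_{m|n}([T(\lambda)]) \neq 0$ by Lemma \ref{tilting}(1). The image is then a non-zero $\mathfrak{sl}(\infty)$-submodule of the simple module $S_{m|n}$ (Proposition \ref{HPS}) and must equal it. The main obstacle I expect is showing that the image actually lands in $S_{m|n}$ rather than only in the strictly larger submodule $\Lambda_{m|n}$; the combination of Lemma \ref{tilting}(2), (3) together with the socle identification in Proposition \ref{HPS} is precisely what cuts things down to the socle.
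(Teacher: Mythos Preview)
Your proof is correct and follows essentially the same route as the paper's: factor $ds_{m-1|n-1}=ds_x\circ ds_{m|n}$, identify the quotient with $ds_{m|n}(\Ker ds_{m-1|n-1})=\Im ds_{m|n}\cap\Ker ds_x$, use the tilting basis together with Lemma~\ref{tilting} to see that this image is spanned by classes of projective modules, and then invoke Proposition~\ref{HPS}. The only cosmetic difference is that the paper concludes directly that the image equals the full span of indecomposable projectives (quoting the fact, established inside the proof of Lemma~\ref{tilting}(3), that $\{ds_{m|n}([T(\lambda)]):\lambda\in C(m|n)\setminus C(m-1|n-1)\}$ exhausts them), whereas you show containment in $S_{m|n}$ and then produce one explicit nonzero element to force equality by simplicity; both arguments are valid and rest on the same lemmas.
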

    \begin{proof} Let us write $ds_{m-1|n-1}=ds_xds_{m|n}$. Then $\Ker ds_{m-1|n-1}/\Ker ds_{m|n}$ is isomorphic to $\Im ds_{m|n}\cap\Ker ds_x$.
      Furthermore  Lemma \ref{tilting} implies that $\Im ds_{m|n}$ is spanned by $ds_{m|n}([T(\lambda])$ for all $\lambda\in C(m|n)$ and $\Im ds_{m|n}\cap\Ker ds_x$
      is spanned by classes of all indecomposable projective modules in $\Rep GL(m|n)$. Therefore the statement follows from Proposition \ref{HPS}.
      \end{proof}

      \begin{lemma}\label{complete} $$\bigcap_{m-n=t}\Ker ds_{m|n}=0.$$ 
      \end{lemma}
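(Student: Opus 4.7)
The plan is to exploit the tilting basis of $K[\mathcal V_t]_{\mathbb C}$ together with the linear independence statement already established in Lemma \ref{tilting}(3). By Lemma \ref{grothgroup} the classes $\{[T(\lambda)]\}$, where $\lambda$ runs over all bipartitions, form a $\mathbb C$-basis of $K[\mathcal V_t]_{\mathbb C}$. So any $x$ in $\bigcap_{m-n=t}\Ker ds_{m|n}$ can be written as a \emph{finite} linear combination
\[
x=\sum_{\lambda\in S}c_\lambda\,[T(\lambda)]
\]
with $S$ a finite set of bipartitions, and it suffices to show all $c_\lambda=0$.

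First I would observe that every bipartition $\lambda=(\lambda^\bullet,\lambda^\circ)$ belongs to $C(m|n)$ provided $m$ and $n$ are sufficiently large. Indeed, taking $k=0$ in the defining condition, the requirement reduces to $\lambda^\bullet_1+(\lambda^\circ)^T_{m+1}\leq n$; as soon as $m$ exceeds the length of $\lambda^\circ$ the second term vanishes, so the condition holds whenever $n\geq\lambda^\bullet_1$. Since the constraint $m-n=t$ leaves us free to let both $m$ and $n$ tend to infinity, we can pick a single pair $(m,n)$ with $m-n=t$ such that every $\lambda\in S$ simultaneously lies in $C(m|n)$.

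Then applying $ds_{m|n}$ to the expression for $x$ yields
\[
0=ds_{m|n}(x)=\sum_{\lambda\in S}c_\lambda\,ds_{m|n}([T(\lambda)]),
\]
an equation inside $J_{m|n}$ in which every summand is indexed by $\lambda\in C(m|n)$. Lemma \ref{tilting}(3) says exactly that such a family of classes is linearly independent in $K_{red}[\Rep GL(m|n)]_{\mathbb C}$, so $c_\lambda=0$ for every $\lambda\in S$, whence $x=0$.

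The only genuinely non-formal step is the combinatorial observation that any bipartition is an $(m|n)$-cross for $m,n$ large; everything else is a direct appeal to previously established results (Lemmas \ref{grothgroup} and \ref{tilting}). The main point to double-check is therefore that one can inflate both $m$ and $n$ while preserving $m-n=t$ — which is obvious — rather than being forced to increase only one parameter, and that the finiteness of $S$ (automatic from $x$ being an element of the Grothendieck group) allows us to handle all involved bipartitions simultaneously rather than one at a time.
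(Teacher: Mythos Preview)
Your argument is correct and gives a clean proof, though it differs from the paper's. The paper argues more structurally: any class $[X]$ lies in $K[\mathcal V_t^k]_{\mathbb C}$ for some $k$, and for $m,n\gg k$ the functor $DS_{m|n}$ restricted to $\mathcal V_t^k\simeq\operatorname{Rep}^kGL(m|n)$ is an equivalence (Lemma~\ref{EHS}), so $ds_{m|n}$ is injective on $K[\mathcal V_t^k]_{\mathbb C}$ and $[X]=0$. Your route instead expands in the tilting basis and invokes Lemma~\ref{tilting}(3) together with the combinatorial fact that every bipartition is an $(m|n)$-cross once $m,n$ are large. The paper's argument is shorter and uses only the definition of $\mathcal V_t$ as a limit; yours has the advantage of staying entirely at the level of Grothendieck groups and of re-using Lemma~\ref{tilting}(3), which was needed anyway for Corollary~\ref{ker}.

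One minor correction in your combinatorial step: with $k=0$ the second term is $(\lambda^\circ)^T_{m+1}$, which vanishes once $m\geq\lambda^\circ_1$ (the largest part of $\lambda^\circ$), not once $m$ exceeds the \emph{length} of $\lambda^\circ$. This does not affect the argument, since in either case the term is zero for $m$ sufficiently large and the conclusion stands.
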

      \begin{proof} Suppose $ds_{m|n}([X])=0$ for all $m,n$ such that $m-n=t$. There exists $k$ such that $[X]\in K[\mathcal V_t^k]_{\mathbb C}$.
        But $ds_{m|n}:K[\mathcal V_t^k]_{\mathbb C}\to K[\Rep^k GL(m|n)]_{\mathbb C}$ is injective for
        sufficiently large $m,n$. Therefore $[X]=0$.
      \end{proof}
      Corollary \ref{ker} and Lemma \ref{complete} prove Theorem \ref{main}(1). Indeed, it suffices to put
      $$\mathfrak R^k:=\begin{cases}\ker ds_{k+t-1,k-1}\,\,\text{if}\,\,t\geq 0,\\ \ker ds_{k-1,k-1-t}\,\,\text{if}\,\,t<0.\end{cases}.$$

      Now let us prove Theorem \ref{main}(2). We consider the case $t\geq 0$, the case of negative $t$ is similar.
      Note that $\mathfrak R$ satisfies the following property: for any $u\in \mathfrak R$,
      $e_au=f_au=0$ for all but finitely many $a$. Let $\mathfrak {l}^{-}_s$ (resp, $\mathfrak {l}^{+}_s$) be the Lie subalgebra of $\mathfrak{sl}(\infty)$ generated
      by $e_a,f_a$ for $a< s$ (resp., $a>s)$. Let $M^+_s:=M^{\mathfrak {l}^{-}_s}$. Then $M^+_s$ is a $\mathfrak {l}^+_s$-module. If $M$ is a submodule of $\mathfrak R$
      then $M=\bigcup _{s<0}M_s^+$ by the above property. In particular, if $M,N$ are two submodules of $\mathfrak R$ such that
      $M^+_s=N^+_s$ for all $s<s_0$, then $M=N$. A simple computation shows that for any $s< 0$
      $$\mathfrak R^+_s\simeq\Lambda^{-s-1}((\mathbb V^\vee)_s^+)\otimes \Lambda^{t-s-1}(\mathbb V_s^+).$$
      Note that $\mathfrak{l}^+_s$ is isomorphic to $\mathfrak{sl}(\infty)$ and $(\mathbb V^\vee)_s^+$ and $\mathbb V_s^+$ are isomorphic to the standard and costandard
      $\mathfrak{l}^+_s$-modules respectively. A description of the lattice of all submodules of $\mathfrak R^+_s$ follows immediately from the socle
      filtration of $\mathfrak R^+_s$, see \cite{PS}. Since every layer of this socle filtration is simple, the only
      submodules of $\mathfrak R^+_s$ are members of the socle filtration $\soc ^{r+1}(\mathfrak R^+_s)$ for some $0\leq r\leq -1-s$. Furthermore,
      $\soc ^{r+1}(\Lambda^{-s-1}((\mathbb V^\vee)_s^+)\otimes \Lambda^{t-s-1}(\mathbb V_s^+)$  is cyclic and is generated by a monomial vector
      $x$ such that $c^{r+1}(x)=0,c^r(x)\neq 0$ for the contraction map
      $$c:\Lambda^{k}((\mathbb V^\vee)_s^+)\otimes \Lambda^{t+k}(\mathbb V_s^+)\to\Lambda^{k-1}((\mathbb V^\vee)_s^+)\otimes \Lambda^{t+k-1}(\mathbb V_s^+).$$  
        For any $p\geq 0$ set
        $$v(p):=(w_{t-1}\wedge w_{t-2}\wedge\dots)\otimes (u_{t+p}\wedge u_{t+p-1}\wedge\dots\wedge u_{t+1}\wedge u_{-p-1}\wedge u_{-p-2}\wedge\dots).$$
        By above $\soc ^{r+1}(\mathfrak R^+_s)$  is generated by $v(-r-s-1)$.  Passing to the direct limit for
        $s\to -\infty$ we obtain that every submodule of $\mathfrak R^+_s$ is generated by $v(p)$ for some $p\geq 0$. Thus, we obtain that every submodule
        of $\mathfrak R$ is generated $v(p)$. On the other hand, it is not difficult to see that $\mathfrak R^r$ is generated by $v(r)$. The statement follows.
        \begin{remark}\label{directlimit} The last argument uses presentation of $\mathfrak R$ as a direct limit. Indeed, for the directed system of algebras
          $\dots\subset \mathfrak {l}^{+}_s\subset \mathfrak {l}^{+}_{s-1}\subset \dots$ (here $s\to-\infty$) we get
          $$\mathfrak R=\lim_{\to} \Lambda^{-s+t-1}((\mathbb V^\vee)_s^+)\otimes \Lambda^{-s-1}(\mathbb V_s^+)$$
          for $t\geq 0$ and similarly
          $$\mathfrak R=\lim_{\to} \Lambda^{-s-1}((\mathbb V^\vee)_s^+)\otimes \Lambda^{-s-t-1}(\mathbb V_s^+)$$
          for $t\leq 0$.
          \end{remark}
        \section{Blocks in $\mathcal V_t$ and dimensions of tilting and standard objects.}
        The module $\mathfrak R$ is a weight $\mathfrak{sl}(\infty)$-module. To simplify bookkeeping we
        embed $\mathfrak{sl}(\infty)\hookrightarrow\mathfrak{gl}(\infty)$ and define a $\mathfrak{gl}(\infty)$-action on $\mathfrak R$ in the natural way.
        We fix  the Cartan subalgebra $\mathfrak h$ of the diagonal matrices in $\mathfrak{gl}(\infty)$, choose the basis $\{E_{i,i}\mid \in\mathbb Z\}$ and denote
        by $\{\theta_i \mid i\in\mathbb Z\}$ the dual system in $\mathfrak h^*$. It is easy to compute the weight $\wt(v_\lambda)$ of the monomial vector
        $v_\lambda$. Precisely for a bipartition $\lambda$ define the sets
        $$A(\lambda):=\{\lambda^{\bullet}_i\mid \lambda^{\circ}_i+t-i\neq \lambda^\bullet_j-j\,\forall j\},$$
        $$B(\lambda):=\{\lambda^{\circ}_j\mid \lambda^{\circ}_j+t-j\neq \lambda^\bullet_i-i\,\forall i\}.$$
        It follows immediately from definition that $A(\lambda)$ and $B(\lambda)$ are finite subsets of $\mathbb Z$ and $|B(\lambda)|-|A(\lambda)|=t$.
\begin{example} If
  $\lambda=(\emptyset,\emptyset)$ then
  $A(\lambda)=\emptyset,\ B(\lambda)=\{0,1,\dots,t-1\}$ for $t>0$ and $A(\lambda)=\{-1,\dots,t\},\ B(\lambda)=\emptyset$ for $t< 0$. For $t=0$
$A(\lambda)=B(\lambda)=\emptyset$.
\end{example}
Then we have
\begin{equation}\label{weight}
  \wt(v_\lambda)=-\sum_{a\in A(\lambda)}\theta_a+\sum_{b\in B(\lambda)}\theta_b.
\end{equation}
\begin{theorem}\label{blocks} For a weight $\theta$ of $\mathfrak R$ let $\mathcal V_t^\theta$ denote the full subcategory of $\mathcal V_t$ consisting of objects
  with simple constituents isomorphic to $L(\lambda)$ with $\wt(v_\lambda)=\theta$. Then $\mathcal V_t$ is the direct sum of $\mathcal V_t^\theta$. Moreover,
  $\mathcal V_t^\theta $ is a block in $\mathcal V_t$ for every $\theta$. 
\end{theorem}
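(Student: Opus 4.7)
The plan is to deduce Theorem \ref{blocks} from the classical block classification of $\Rep GL(m|n)$, transported along the equivalences $\mathcal V_t^k\simeq\Rep^k GL(m|n)$ of Lemma \ref{EHS}.

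For the direct sum decomposition, I would first fix $k$ and choose $m,n$ with $m-n=t$ large enough that $\mathcal V_t^k\simeq \Rep^k GL(m|n)$. The block decomposition of $\Rep GL(m|n)$ by central character of $U(\mathfrak{gl}(m|n))$ restricts to $\Rep^k GL(m|n)$ and transports through the equivalence to a decomposition of $\mathcal V_t^k$. These decompositions are compatible with the inclusions $\mathcal V_t^k\hookrightarrow\mathcal V_t^{k+1}$ and with the equivalences $DS_x:\Rep^k GL(m|n)\to\Rep^k GL(m-1|n-1)$ in the relevant range (since each such $DS_x$ is exact and therefore preserves central character), so they glue to a decomposition of $\mathcal V_t$.

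Next I would identify the indexing set with the weights $\theta=\wt(v_\lambda)$. Under the equivalence, $L(\lambda)$ corresponds to the simple $\mathfrak{gl}(m|n)$-module of highest weight $\sum_i\lambda_i^\bullet\varepsilon_i-\sum_j\lambda_j^\circ\delta_j$ with respect to the mixed Borel prescribed in the paper. By Brundan's weight-diagram classification of blocks of $\Rep GL(m|n)$ (cf.\ \cite{B}), the block of this simple is encoded by the pair of multisets $\{\lambda_i^\bullet-i+t\mid 1\leq i\leq m\}$ and $\{\lambda_j^\circ-j\mid 1\leq j\leq n\}$ after cancelling all matched entries. What remains after cancellation is precisely the pair $(A(\lambda),B(\lambda))$ figuring in (\ref{weight}), so the block of $L(\lambda)$ depends only on $\theta=\wt(v_\lambda)$. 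Hence $\mathcal V_t=\bigoplus_\theta\mathcal V_t^\theta$.

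For indecomposability of each $\mathcal V_t^\theta$ I would fix $\theta$, choose $k$ so large that every bipartition $\lambda$ with $\wt(v_\lambda)=\theta$ and $|\lambda^\bullet|+|\lambda^\circ|\leq k$ already labels a simple in $\mathcal V_t^k$, and observe that the corresponding simples lie in a single block of $\Rep^k GL(m|n)$. Such a block is indecomposable by the classical theory of $\Rep GL(m|n)$: any two of its simples are joined by a chain of non-split extensions, for instance through the translation functors $E_a,F_a$. Since the inclusions $\mathcal V_t^k\hookrightarrow\mathcal V_t^{k+1}$ are exact and fully faithful, this connectivity is preserved under passage to the direct limit, and $\mathcal V_t^\theta$ is a single block. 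The main obstacle will be the bookkeeping in the previous paragraph: the mixed Borel (with reversed $\delta$-ordering) together with the shift by $t$ in the realization of $\mathfrak F^\vee_t$ require careful conversion to Brundan's standard weight diagrams, and one must verify that the ``$\times$''-labels in the diagram of the highest weight of $L(\lambda)$ correspond exactly to the matched pairs $\lambda_i^\bullet-i+t=\lambda_j^\circ-j$ cancelled in the definitions of $A(\lambda)$ and $B(\lambda)$.
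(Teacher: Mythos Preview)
Your approach differs from the paper's. The paper works intrinsically in $\mathcal V_t$: since each $\mathcal V_t^k$ is a highest weight category, $\Ext^1(L(\lambda),L(\mu))\neq 0$ forces $[V(\lambda):L(\mu)]\neq 0$ or $[V(\mu):L(\lambda)]\neq 0$; the combinatorial description of these multiplicities from \cite{E} then shows directly that the equivalence closure of this relation on bipartitions is exactly the relation $\wt(v_\lambda)=\wt(v_\mu)$. You instead transport the central-character block decomposition of $\Rep GL(m|n)$ through the equivalence of Lemma~\ref{EHS}. For the direct sum statement $\mathcal V_t=\bigoplus_\theta\mathcal V_t^\theta$ this is fine, and the identification of central characters with the weights $\theta$ via the sets $A(\lambda),B(\lambda)$ is correct up to the bookkeeping you flag.

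The gap is in your indecomposability argument. Restricting a block of $\Rep GL(m|n)$ to $\Rep^k GL(m|n)$ yields a Serre subcategory that need not be indecomposable: the chain of non-split extensions linking two simples of bipartition type inside a block of $\Rep GL(m|n)$ may pass through simples whose highest weights are \emph{not} of the form $\sum\lambda_i^\bullet\varepsilon_i-\sum\lambda_j^\circ\delta_j$ with $\lambda^\bullet,\lambda^\circ$ partitions. Such simples do not lie in $\Rep^{k'} GL(m|n)$ for any $k'$, so enlarging $k$ and passing to the direct limit does not recover them. Your sentence ``observe that the corresponding simples lie in a single block of $\Rep^k GL(m|n)$'' is thus exactly the point that requires proof. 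To close the gap you must show that two bipartitions $\lambda,\mu$ with $\wt(v_\lambda)=\wt(v_\mu)$ can be linked by a chain each step of which has $[V(\cdot):L(\cdot)]\neq 0$ \emph{within the bipartition region}; that is precisely the ``simple combinatorial argument'' the paper invokes from \cite{E}. So your route ultimately needs the same combinatorial input as the paper's, just wrapped differently.
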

\begin{proof} Since $\mathcal V^k_t$ is a highest weight category for every $k$ we have
  $$\Ext^1(L(\lambda),L(\mu))\neq 0 \Rightarrow [V(\lambda):L(\mu)]\neq 0\ \text{or}\ [V(\mu):L(\lambda)]\neq 0.$$
  On the other hand, since $V(\lambda)$ is indecomposable all  its simple constituents lie in the same block of $\mathcal V_t$.
    Combinatorial description of the multiplicities  $[V(\lambda):L(\mu)]\neq 0$ is given in \cite{E}. It is clear from this description that
  $[V(\lambda):L(\mu)]\neq 0$ implies $\wt(v_\lambda)=\wt (v_\mu)$. Let $\sim$ be the equivalence closure of 
  $[V(\lambda):L(\mu)]\neq 0$. Then a simple combinatorial argument implies that $\lambda\sim\mu$ if and only if $\wt(v_\lambda)=\wt(v_\mu)$. 
  \end{proof}
  Let us denote by $\dim M$ the categorical dimension of an object $M$ in $\mathcal V_t$. Since $DS_{m|n}$ is a symmetric monoidal functor it preserves
  categorical dimension. Therefore for every $m,n$ such that $m-n=t$ we have
  \begin{equation}\label{dimension}
    \dim M=\sdim DS_{m|n}M. 
    \end{equation}
    We call weight $\theta$ positive (resp., negative) if $\theta=\sum_{c\in C}\theta_c$, (resp., $\theta=-\sum_{c\in C}\theta_c$). In this definition
    $\theta=0$ is both positive and negative.
    \begin{lemma}\label{dim} \begin{enumerate}
      \item     If $\theta$ is neither positive nor negative, then $\dim M=0$ for every object $M$ in $\mathcal V_t^\theta$.
      \item If $t<0$ and $\theta=\sum_{c\in C}\theta_c$ is positive (resp., $t\geq 0$ and $\theta=-\sum_{c\in C}\theta_c$ is negative), then for every object
        $M$  in $\mathcal V_t^\theta$
        we have
        $\dim M=\kappa(M)q(\theta)$
        for some integer $\kappa(M)$ and
        $$q(\theta)=\frac {\prod_{a<b,a,b\in C}(b-a)}{\prod_{j=1}^{|t|-1}j!}.$$
        \end{enumerate}
      \end{lemma}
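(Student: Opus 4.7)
The main tool is formula (\ref{dimension}): $\dim M=\sdim DS_{m|n}M$ for any $(m,n)$ with $m-n=t$. Since $DS_{m|n}$ sends $\mathcal V_t^\theta$ into a single block of $\Rep GL(m|n)$, determined by the matching data carried by $\theta$, and since $\sdim$ descends to $K_{red}[\Rep GL(m|n)]_{\mathbb C}$ (because $\sdim\mathbb C^{0|1}=-\sdim\mathbb C$), the problem reduces to computing superdimensions of simple $\mathfrak{gl}(m|n)$-modules in that target block.

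For part (1), write $\theta=-\sum_{a\in A}\theta_a+\sum_{b\in B}\theta_b$ with $A,B$ disjoint and both nonempty, and choose $(m,n)$ with $m-n=t$, $m>|B|$, $n>|A|$. The block of $\Rep GL(m|n)$ corresponding to $\mathcal V_t^\theta$ has atypicality $k=m-|B|=n-|A|$, and when both $|A|,|B|>0$ this satisfies $k<\min(m,n)$; that is, the block is not of maximal atypicality. By Serganova's classical vanishing theorem (which is compatible with the support variety picture used in \cite{DS}), the superdimension of any simple $\mathfrak{gl}(m|n)$-module of non-maximal atypicality is zero. Hence $\sdim DS_{m|n}M=0$ for every $M\in\mathcal V_t^\theta$, giving $\dim M=0$.

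For part (2), take $\theta=\sum_{c\in C}\theta_c$ with $|C|=|t|$ in the case $t\geq 0$ (the case $t<0$ is analogous after flipping signs), so $A=\emptyset$ and $B=C$. For $(m,n)$ with $m-n=t$ and $m,n\gg 0$, the corresponding block of $\Rep GL(m|n)$ has atypicality $\min(m,n)$, so it is maximally atypical. By the theorem of Serganova (and Heidersdorf--Weissauer in the $GL$ case), the superdimension of a maximally atypical simple in such a block depends only on the core of the block and equals, up to a sign, the Weyl dimension of an irreducible $\mathfrak{gl}(|t|)$-module whose highest weight is encoded by that core. When the block is $\mathcal V_t^\theta$, the core datum is precisely the subset $C\subset\mathbb Z$, and the resulting $\mathfrak{gl}(|t|)$-module has Weyl dimension
$$\frac{\prod_{a<b,\;a,b\in C}(b-a)}{\prod_{j=1}^{|t|-1}j!}=q(\theta).$$
Expanding $[DS_{m|n}M]=\sum n_\mu\,[L(\mu)]$ in $K_{red}$ with integer coefficients $n_\mu$ and applying $\sdim$ yields $\dim M=\left(\sum \pm n_\mu\right)q(\theta)=\kappa(M)\,q(\theta)$ with $\kappa(M)\in\mathbb Z$.

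The principal obstacle is identifying the core of the maximally atypical $\mathfrak{gl}(m|n)$-block with the subset $C$ defining $\theta$, and verifying that the Weyl dimension formula for $\mathfrak{gl}(|t|)$ at the corresponding highest weight produces exactly $q(\theta)$; in particular, the factor $\prod_{j=1}^{|t|-1}j!$ appears as the Weyl denominator for $\mathfrak{gl}(|t|)$ evaluated at the identity. The remaining steps are routine assembly of Serganova's superdimension theorem with formula (\ref{dimension}).
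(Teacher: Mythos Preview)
Your approach is correct but substantially heavier than the paper's. You work with large $(m,n)$ and invoke deep results on superdimensions of simple $\mathfrak{gl}(m|n)$-modules: the vanishing for non-maximally atypical simples in part~(1), and the Heidersdorf--Weissauer/Serganova formula for maximally atypical simples in part~(2). The paper instead takes the \emph{smallest} possible pair, namely $(m,n)=(t,0)$ when $t\ge 0$ (respectively $(0,-t)$ when $t<0$), where $\Rep GL(m|n)$ is just the semisimple category of finite-dimensional representations of the classical group $GL(|t|)$. Part~(1) then falls out immediately: $ds_{t|0}$ (resp.\ $ds_{0|-t}$) is an $\mathfrak{sl}(\infty)$-homomorphism into $\Lambda_{t|0}$ (resp.\ $\Lambda_{0|-t}$), all of whose weights are negative (resp.\ positive), so it annihilates any weight component $\mathcal V_t^\theta$ with $\theta$ of the wrong sign, and then $\dim M=\sdim DS_{t|0}M=0$ via~(\ref{dimension}). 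For part~(2), each weight space of $\Lambda_{t|0}$ (resp.\ $\Lambda_{0|-t}$) is one-dimensional, so $DS_{t|0}M$ (resp.\ $DS_{0|-t}M$) is, up to parity, a multiple of a single irreducible $GL(|t|)$-module $W(\theta)$; the classical Weyl dimension formula for $GL(|t|)$ gives $\sdim W(\theta)=\pm q(\theta)$, which yields the claim with $\kappa(M)\in\mathbb Z$.

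Thus both proofs reach the same conclusion, but the paper's argument replaces the Heidersdorf--Weissauer theorem by the elementary Weyl dimension formula for $GL(|t|)$, and replaces the superdimension-vanishing theorem by a trivial weight argument. Your route has the merit of illustrating how the block/atypicality picture in $\Rep GL(m|n)$ governs dimensions, but it imports machinery the statement does not need. One minor slip: in your treatment of~(2) you swapped the positive/negative cases relative to the sign of $t$; for $t\ge 0$ the relevant $\theta$ are the negative ones (matching the weights of $\Lambda_{t|0}=\Lambda^t\mathbb V^\vee$), not the positive ones.
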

      \begin{remark} If $t=0$ the only positive (and negative) weight $\theta$ is zero and $q(\theta)=1$. 
        \end{remark}
  \begin{proof} Say $t\geq 0$. All weights of $\Lambda_{t|0}$ are negative. Since $ds_{t|0}:\mathfrak R\to\Lambda_{t|0}$ is a
    homomorphism of $\mathfrak{sl}(\infty)$-modules $ds_{t|0}[M]=0$ for every $M\in \mathcal V_t^\theta$. Hence the statement is a consequence of
    (\ref{dimension}).
    Similarly for $t<0$ we have  $ds_{0|-t}:\mathcal R\to\Lambda_{0|-t}$ is zero since all weights of $\Lambda_{0|-t}$ are positive. The proof of (1) is complete.

    Let us prove (2). Note in $\Lambda_{t|0}$ and $\Lambda_{0|-t}$ all weight spaces are one-dimensional and the corresponding categories of $GL(|t|)$-supermodules
    are semisimple. Therefore $DS_{t|0}M$ (resp., $DS_{0|-t}M$) is a direct sum of several copies of a certain irreducible representation $W(\theta) $
    of $GL(|t|)$.
    The highest weight
    $\nu(\theta)$ of $W(\theta)$ can be easily expressed in terms of $C=\{c_1>c_2>\dots>c_{|t|}\}$. For $t\geq 0$
    $\nu(\theta)=(c_1+1-t,c_2+2-t,\dots,c_t)$ and for $t<0$ $\nu(\theta)=(c_1+1,\dots,c_{-t}-t)$. Then
    by the Weyl dimension formula we have
    $\sdim W(\theta) =\pm q(\theta)$. This implies (b).
  \end{proof}
  \begin{remark}\label{functorblocks} It is proven in \cite{DS} that $DS_x:\operatorname{Rep}GL(m|n)\to \operatorname{Rep}GL(m-k|n-k)$ maps a block to a block
    corresponding to the same weight of $\mathfrak{gl}(\infty)$.
    Hence  $DS_{m|n}$ induces a functor from  a block $\mathcal V_t^{\theta}$ to the corresponding block $\operatorname{Rep}^{\theta}GL(m|n)$. In particular,
    $DS_{t|0}$ (resp., $DS_{0,|t|}$) annihilates any object in $\mathcal V_t^\theta$ if $\theta$ is not negative (resp., not positive).
    \end{remark}
  \begin{lemma}\label{uniquness} Let $t\geq 0$ (resp., $t<0$).  Then $$\Hom_{\mathfrak{sl}(\infty)}(\mathfrak R,\Lambda^t(\mathbb V^\vee)=\mathbb C,\,\,\text
    {respectively,}\,\,
   \Hom_{\mathfrak{sl}(\infty)}(\mathfrak R,\Lambda^{-t}(\mathbb V))=\mathbb C.$$ 
 \end{lemma}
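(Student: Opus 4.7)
The plan is to combine the Main Theorem with a direct identification of $\Lambda^t(\mathbb V^\vee)$ as one of the simple modules $S_{m,n}$ from the introduction, and then conclude by Schur's Lemma. I write the argument for $t\geq 0$; the case $t<0$ is symmetric, swapping the roles of $\mathbb V$ and $\mathbb V^\vee$ and replacing $S_{t,0}$ by $S_{0,-t}$.

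First, I would observe that $\Lambda^t(\mathbb V^\vee)$ is a simple $\mathfrak{sl}(\infty)$-module, canonically isomorphic to $S_{t,0}$. Indeed, $S_{m,n}$ was defined as the kernel of the contraction $c_{m,n}:\Lambda^m\mathbb V^\vee\otimes\Lambda^n\mathbb V\to\Lambda^{m-1}\mathbb V^\vee\otimes\Lambda^{n-1}\mathbb V$, and for $(m,n)=(t,0)$ the target involves $\Lambda^{-1}\mathbb V=0$, so $c_{t,0}=0$ and $\Lambda^t\mathbb V^\vee=S_{t,0}$. For existence of a nonzero homomorphism, Theorem \ref{main}(1) gives $\mathfrak R/\mathfrak R^1\simeq S_{t,0}$, so the composition of the canonical quotient map with the identification $S_{t,0}\simeq\Lambda^t\mathbb V^\vee$ supplies a nonzero element of $\Hom_{\mathfrak{sl}(\infty)}(\mathfrak R,\Lambda^t\mathbb V^\vee)$.

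For uniqueness, let $\phi:\mathfrak R\to\Lambda^t\mathbb V^\vee$ be any nonzero homomorphism. Since $\Lambda^t\mathbb V^\vee$ is simple, $\phi$ is surjective; moreover $\mathfrak R$ itself is not simple, because $\mathfrak R^1$ is a proper nonzero submodule (its first quotient $\mathfrak R^1/\mathfrak R^2\simeq S_{t+1,1}$ is nonzero), so $\ker\phi$ is a nonzero proper submodule. By Theorem \ref{main}(2), $\ker\phi=\mathfrak R^r$ for some $r\geq 1$. Each successive quotient $\mathfrak R^k/\mathfrak R^{k+1}\simeq S_{k+t,k}$ being simple, the module $\mathfrak R/\mathfrak R^r$ has length exactly $r$; for $\mathfrak R/\mathfrak R^r$ to map isomorphically onto the simple module $\Lambda^t\mathbb V^\vee$, we must have $r=1$. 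Hence every nonzero $\phi$ factors through $\mathfrak R\twoheadrightarrow\mathfrak R/\mathfrak R^1\xrightarrow{\sim}\Lambda^t\mathbb V^\vee$, and Schur's Lemma applied to the simple module $S_{t,0}$ yields a one-dimensional space of such maps.

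There is no serious obstacle here: all the heavy lifting was already carried out in Theorem \ref{main}, and the lemma is a clean corollary. The only minor point is recognizing $\Lambda^t(\mathbb V^\vee)$ (respectively $\Lambda^{-t}(\mathbb V)$ for $t<0$) as the specific simple module $S_{t,0}$ (respectively $S_{0,-t}$) that appears at the top of the filtration $\mathfrak R\supset\mathfrak R^1\supset\cdots$.
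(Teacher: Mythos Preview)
Your argument is correct and is precisely the intended unpacking of the paper's one-line proof, which simply says ``Immediate consequence of Theorem~\ref{main}.'' The identification $\Lambda^t(\mathbb V^\vee)=S_{t,0}$ together with the classification of submodules in Theorem~\ref{main}(2) and Schur's lemma is exactly how that consequence is meant to be read.
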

 \begin{proof} Immediate consequence of Theorem \ref{main}.
 \end{proof}

 Next we are going to construct a homomorphism  $\varphi:\mathfrak R\to \Lambda^t(\mathbb V^\vee)$, (resp., $\varphi:\mathfrak R\to \Lambda^{-t}(\mathbb V)$
 by defining it on the monomial basis $v_\lambda=w_{\lambda^\bullet}\otimes u_{\lambda^{\circ}}$. Let $t>0$ and
 $$u_{\lambda^{\circ}}=u_{i_1}\wedge u_{i_2}\wedge\dots, \quad w_{\lambda^{\bullet}}=w_{j_1}\wedge w_{j_2}\wedge\dots.$$
 If $\wt(v_\lambda)=-\theta_{a_1}-\dots-\theta_{a_t}$ is negative we can write
 $$w_{\lambda^\bullet}=(-1)^{s(\lambda)}w_{a_1}\wedge\dots \wedge  w_{a_t}\wedge w_{i_1}\wedge\dots w_{i_2}\wedge,$$
 and then set
 $$\varphi(v_\lambda):=(-1)^{s(\lambda)}\prod_{i_k\neq -k}(-1)^{i_k}w_{a_1}\wedge\dots\wedge w_{a_t}.$$
 If $\wt(v_\lambda)$ is not negative we set $\varphi(v_\lambda):=0$.
 The easiest way to see that $\varphi$ commutes with action of $\mathfrak{sl}(\infty)$ is to realize it as the direct limit as in Remark \ref{directlimit}.
 Then $\varphi$ is the direct limit of contraction maps $\Lambda^{-s+t}(\mathbb V^\vee)\otimes \Lambda^{-s}(\mathbb V)\to \Lambda^{t}(\mathbb V^\vee)$.

 Similarly, for negative $t$  with $\wt(v_\lambda)=\theta_{a_1}+\dots+\theta_{a_{-t}}$ we write
 $$u_{\lambda^\circ}=(-1)^{s(\lambda)}u_{a_1}\wedge\dots \wedge  u_{a_{-t}}\wedge u_{j_1}\wedge\dots w_{j_2}\wedge,$$
 and we set
 $\varphi(v_\lambda)=(-1)^{s(\lambda)}\prod_{j_k\neq -k}(-1)^{j_k}u_{a_1}\wedge\dots\wedge u_{a_{-t}}$.
 In both cases if $\theta=\wt(\lambda)$ is positive or negative we can write
 $$\varphi(v_\lambda)=(-1)^{r(\lambda)} [W(\theta)],$$
 for certain $r(\lambda)\in\mathbb Z$.
 
 \begin{proposition}\label{standard} If $t\geq 0$ and $\theta$ is negative then dimension of $V(\lambda)$ in $\mathcal V_t^\theta$
   equals $(-1)^{r(\lambda)}q(\theta)$.

   If $t<0$  and $\theta$ is positive then dimension of $V(\lambda)$ in $\mathcal V_t^\theta$
   equals $(-1)^{r(\lambda)+\frac{t(t-1)}{2}+\sum_{i=1}^t a_i}q(\theta)$.
 \end{proposition}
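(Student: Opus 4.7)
The approach is to compute $\dim V(\lambda)$ by applying the relation $\dim=\sdim\circ DS$ (equation \eqref{dimension}) to the \emph{smallest} target $(m,n)$ that still sees the block of $V(\lambda)$, and to identify the resulting map $ds_{m|n}$ with the explicit homomorphism $\varphi$.

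For $t\ge 0$ I would take $(m,n)=(t,0)$. Then $GL(t|0)=GL(t)$ is purely even, $\Rep GL(t)$ is semisimple, and the reduced Grothendieck group $J_{t|0}$ coincides with $\Lambda_{t|0}\simeq \Lambda^t\mathbb V^\vee$. By Corollary \ref{modhom}, $ds_{t|0}:\mathfrak R\to \Lambda^t\mathbb V^\vee$ is an $\mathfrak{sl}(\infty)$-module homomorphism, and by Lemma \ref{uniquness} the hom space is one-dimensional; hence $ds_{t|0}$ agrees with $\varphi$ up to a scalar. To pin down this scalar (and all signs) I would use the direct-limit picture of Remark \ref{directlimit} together with the construction of $DS_{m|n}$ in Lemma \ref{functor} as an iterated composition of ordinary $DS_x$'s: by Lemma \ref{HR}, each $ds_x$ induces on reduced Grothendieck groups the basic contraction $c(w_i\otimes u_j)=(-1)^j\delta_{i,j}$ of a single $\mathbb V^\vee$-factor against a single $\mathbb V$-factor, which is precisely the contraction whose direct limit defines $\varphi$ on the monomial basis.

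Once $ds_{t|0}$ is identified with $\varphi$ on monomials we obtain $[DS_{t|0}V(\lambda)]=(-1)^{r(\lambda)}[W(\theta)]$ in $K_{red}[\Rep GL(t)]_{\mathbb C}$, and hence $\dim V(\lambda)=(-1)^{r(\lambda)}\sdim W(\theta)$. Weyl's dimension formula applied to the $GL(t)$-highest weight $\nu(\theta)=(c_1+1-t,\dots,c_t)$ recorded in the proof of Lemma \ref{dim} reduces, after the usual Vandermonde cancellation against $\prod_{j=1}^{t-1}j!$, to $q(\theta)$; this gives case (a).

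For $t<0$ one runs the parallel argument with $(m,n)=(0,-t)$ and target $\Lambda^{-t}\mathbb V$. The highest weight of $W(\theta)$ is now $\nu(\theta)=(c_1+1,\dots,c_{|t|}-t)$, and in order to apply Weyl's formula cleanly this sequence must be reordered into a decreasing one; this permutation, combined with the shift in the Vandermonde numerator, contributes exactly the extra factor $(-1)^{t(t-1)/2+\sum_{i=1}^{|t|}a_i}$ relative to $q(\theta)$. The main obstacle is precisely this sign bookkeeping: once $ds_{m|n}=\varphi$ is established, all conceptual steps are fixed, and what remains is a direct but careful comparison of Weyl's formula with the definition of $q(\theta)$, particularly in the $t<0$ case where the reordering sign must be tracked.
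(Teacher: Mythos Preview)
Your overall strategy---apply \eqref{dimension} with the smallest target, identify $ds_{t|0}$ (resp.\ $ds_{0|-t}$) with $\varphi$ via Lemma~\ref{uniquness}, and read off the answer on the monomial basis---is exactly the paper's. Two points, however, deserve correction.

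First, your normalization argument is needlessly elaborate. You do not have to identify each individual $ds_x$ with the contraction $c$ in order to fix the scalar. The paper simply observes that $DS_{t|0}(\mathbf 1)$ (resp.\ $DS_{0|-t}(\mathbf 1)$) is the trivial module, so $ds$ and $\varphi$ agree on the vacuum vector $v_{\emptyset,\emptyset}$; one-dimensionality of the hom space then forces $ds=\varphi$. This avoids any appeal to Lemma~\ref{HR} beyond what is already built into Corollary~\ref{modhom}.

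Second, and more importantly, your explanation of the extra sign for $t<0$ is incorrect. The sequence $\nu(\theta)=(c_1+1,c_2+2,\dots,c_{|t|}-t)$ is already weakly decreasing (since $c_i\ge c_{i+1}+1$ implies $c_i+i\ge c_{i+1}+(i+1)$), so no reordering is needed, and Weyl's dimension formula yields the positive number $q(\theta)$ directly. The sign does not come from any ``permutation'' or ``shift in the Vandermonde numerator''; it comes from the fact that $W(\theta)$ is a representation of $GL(0|-t)$, whose standard module is purely \emph{odd}. Hence $\sdim W(\theta)=(-1)^{|\nu(\theta)|}\dim W(\theta)$, and $|\nu(\theta)|=\sum_{i=1}^{|t|}(c_i+i)\equiv \sum a_i+\tfrac{t(t-1)}{2}\pmod 2$. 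This parity correction is precisely the extra factor in the statement. Without recognizing this, your argument for $t<0$ does not close.
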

 \begin{proof} First let us see that $ds_{t|0}$ (resp., $ds_{0|-t}$) equals $\varphi$. Indeed, if $\mathbf 1$ denotes the unit object in $\mathcal V_t$ then
   $DS_{t|0}(\mathbf 1)$ (resp.,  $DS_{0|-t}(\mathbf 1)$)
   is the trivial module. Hence $ds_{t|0}$ (resp., $ds_{0|-t}$) coincides with $\varphi$ on the vacuum vector $v_{\emptyset,\emptyset}$. Then the statement follows from
   Lemma \ref{uniquness}.

   Let $t\geq 0$ then $ds_{t|0}(v_\lambda)=(-1)^{r(\lambda)} [W(\theta)]$ and $\sdim W(\theta)=q(\theta)$ since $W(\theta)$ is even. This implies the lemma
   by (\ref{dimension}).
   
   Let $t<0$ then $ds_{0|-t}(v_\lambda)=(-1)^{r(\lambda)} [W(\theta)]$ and the parity of $W(\theta)$ is equal to the parity of the highest weight
   $\nu(\theta)$. The latter is equal
   to the parity of $\sum_{i=1}^t a_i+\frac{t(t-1)}{2}$. Hence the lemma.
   \end{proof}
   \begin{remark}\label{combinatorics} Let us explain how to compute $r(\lambda)$ in terms of weight diagram $f_\lambda$ (see Section 4.1 in \cite{E}).
     Recall that $f_{\lambda}:\mathbb Z\to \{<,>,\times,\circ\}$ is defined as follows:
     \begin{itemize}
     \item  $f_\lambda(i)=\circ$ if $u_i$ and $w_i$ do not occur in $v_\lambda$;
     \item  $f_\lambda(i)=<$ if $u_i$ occurs in $v_\lambda$ and $w_i$ does  not;
     \item  $f_\lambda(i)=>$ if $w_i$ occurs in $v_\lambda$ and $u_i$ does  not;
     \item  $f_\lambda(i)=\times$ if both $u_i$ and $w_i$ occur in  $v_\lambda$.
     \end{itemize}
     We represent $f_\lambda$ graphically by putting symbol $f_\lambda(i)$ into position $i$ on the number line. By definition $f_\lambda(i)=\circ$ for $i>>0$ and
     $f_\lambda(i)=\times$ for $i<<0$. If $\theta=\wt(\lambda)$ is positive then there are
     no symbols $>$ and if it is negative there are no symbol $<$. Symbols $<,>$ are called the core symbols. The core diagram is obtained from $f_\lambda$
     by replacing all $\times$-s by $\circ$-s. Furthermore, $L(\lambda)$ and $L(\mu)$ are in the same block if and only if the core diagrams of $\lambda$ and $\mu$
     coincide. Then $s(\lambda)$ equals the sum over all core symbols of the number of $\times$ to the right of that symbol. Now
     let $$u(\lambda)=\begin{cases}\sum_{i\geq 0,f_\lambda(i)=\times}i\,\text{for}\,t\geq 0,\\ \sum_{i>-t,f_\lambda(i)=\times}i\,\text{for}\,t<0\end{cases}.$$
   Then $r(\lambda)=u(\lambda)+s(\lambda)$.
 \end{remark}
 \begin{proposition}\label{tiltingdim} Let $\theta$ be negative or positive. There is exactly one up to isomorphism tilting object $T(\lambda)$ in the block $\mathcal V_t^\theta$ such that
   $\dim T(\lambda)\neq 0$. This is a unique tilting object in $\mathcal V_t^\theta$ such that $T(\lambda)\simeq V(\lambda)\simeq L(\lambda)$.
 \end{proposition}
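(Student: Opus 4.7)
The plan is to combine the dimension identity \(\dim T(\lambda)=\sdim DS_{t|0}T(\lambda)\) from (\ref{dimension}) with the classification of nonvanishing tilting images from Theorem \ref{CW}; I treat \(t\geq 0\) (the case \(t<0\) is symmetric via \(DS_{0|-t}\)). Since \(DS_{t|0}T(\lambda)\simeq F_{t|0}T(\lambda)\) by Lemma \ref{functor}, and since in the semisimple category \(\Rep GL(t)\) superdimension is nonzero on every nonzero object, \(\dim T(\lambda)\ne 0\) is equivalent to \(F_{t|0}T(\lambda)\ne 0\), which by Theorem \ref{CW}(1) is equivalent to \(\lambda\in C(t|0)\). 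Theorem \ref{CW}(2) then says that distinct \(\lambda\in C(t|0)\) produce pairwise non-isomorphic indecomposable (hence simple) objects of \(\Rep GL(t)\), while Remark \ref{functorblocks} forces \(DS_{t|0}\) to land in a single block of \(\Rep GL(t)\) when restricted to \(\mathcal V_t^\theta\); as that target block contains only one simple \(W(\theta)\), at most one \(\lambda\in\mathcal V_t^\theta\) lies in \(C(t|0)\). This yields the uniqueness statement for tilting objects of nonzero dimension in \(\mathcal V_t^\theta\).

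To identify the unique such \(\lambda\) as the distinguished \(\lambda^\ast\) with \(T(\lambda^\ast)\simeq V(\lambda^\ast)\simeq L(\lambda^\ast)\), I would exhibit a ground state in the block. The combinatorial block description of \cite{E} produces a \(\subset\)-minimal \(\lambda^\ast\in\mathcal V_t^\theta\) (namely, the bipartition whose weight diagram agrees with the core of the block outside the \(\times\)-tail at \(-\infty\)); by (\ref{mult}) no \(L(\mu)\) with \(\mu\subsetneq\lambda^\ast\) can appear as a composition factor of \(V(\lambda^\ast)\), so \(V(\lambda^\ast)\simeq L(\lambda^\ast)\). Reciprocity (\ref{reciprocity}) together with (\ref{mult}) then gives \(K(\lambda^\ast,\mu)=\delta_{\lambda^\ast,\mu}\), so \(T(\lambda^\ast)\simeq V(\lambda^\ast)\). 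Proposition \ref{standard} supplies \(\dim V(\lambda^\ast)=\pm q(\theta)\ne 0\), hence \(\dim T(\lambda^\ast)\ne 0\), and the uniqueness above pins \(\lambda^\ast\) down as the \(\lambda\) of the first paragraph. Conversely, any \(\mu\) with \(T(\mu)\simeq V(\mu)\simeq L(\mu)\) has \(\dim T(\mu)=\dim V(\mu)\ne 0\) by Proposition \ref{standard}, hence equals \(\lambda^\ast\) by the same uniqueness.

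The hard part will be the existence of the ground state \(\lambda^\ast\) in each block: this is the only step that does not reduce to a formal consequence of the functor/dimension machinery but instead relies on the explicit combinatorial description of composition multiplicities in \cite{E}. All other inputs --- Theorem \ref{CW}, Remark \ref{functorblocks}, Proposition \ref{standard}, and the reciprocity identities --- enter as black boxes.
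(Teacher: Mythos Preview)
Your argument is correct and takes a genuinely different route from the paper for the uniqueness step. The paper argues that $\dim T(\lambda)\ne 0$ forces $T(\lambda)\simeq V(\lambda)$ by tracking how translation functors act on the standard basis: building $T(\lambda)$ as a summand of $F_{a_1}\cdots E_{b_p}\mathbf 1$, one observes that the only way a nontrivial standard filtration can arise is if some intermediate weight leaves the negative cone, at which point $DS_{t|0}$ kills everything. It then appeals to the weight-diagram combinatorics of \cite{E} to see that $T(\lambda)\simeq V(\lambda)$ singles out a unique diagram per block (all $\times$ left of all $\circ$). You bypass the translation-functor computation entirely by invoking Theorem~\ref{CW} and Remark~\ref{functorblocks}: two distinct $(t|0)$-crosses in the same block would map under $F_{t|0}$ to two non-isomorphic simples in a block of $\Rep GL(t)$ that has only one. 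This is cleaner and more structural; the paper's argument, on the other hand, yields the extra information $T(\lambda)\simeq V(\lambda)$ directly from $\dim T(\lambda)\ne 0$ without first locating the ground state.

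One small imprecision: the sentence ``in the semisimple category $\Rep GL(t)$ superdimension is nonzero on every nonzero object'' is false as stated (take $W\oplus\Pi W$). What you actually use is that $F_{t|0}T(\lambda)$ is \emph{indecomposable} by Theorem~\ref{CW}(2) and purely even (since $V_{t|0}$ is even, all mixed tensor powers are even), hence an ordinary irreducible $GL(t)$-module with nonzero dimension. The evenness is also what makes your block argument go through, since a priori the target block contains both $W(\theta)$ and $\Pi W(\theta)$. Once this is said, both directions of the equivalence $\dim T(\lambda)\ne 0\Leftrightarrow\lambda\in C(t|0)$ and the ``at most one per block'' claim are clean. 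The existence of the $\subset$-minimal $\lambda^\ast$ in each block is, as you note, the one genuinely combinatorial input, and it is the same input the paper uses.
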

 \begin{proof} We start with proving that $\dim T(\lambda)\neq 0$ implies $T(\lambda)\simeq V(\lambda)$ and deal with the case $t\geq 0$. The other case is similar.
   Every $T(\lambda)$ is a direct summand in  $V_t^{\otimes p}\otimes (V_t^*)^{\otimes q}$, therefore it is an indecomposable summand in
   $F_{a_1}\dots F_{a_q}E_{b_1}\dots E_{b_q}\mathbf 1$. Note that $\mathbf 1=V(\emptyset,\emptyset)$. An easy computation shows that for every $\kappa$
   $e_a(v_\kappa)$ and $f_a(v_\kappa)$ is zero, $v_\mu$ or a sum $v_\mu+v_\nu$. Moreover, the latter case is only possible if
   $\wt(\kappa)$ is not positive. If $T(\lambda)$ is not isomorphic to $V(\lambda)$ then for some $k$
   $$F_{a_k}\dots F_{a_q}E_{b_1}\dots E_{b_q} \mathbf 1\in\mathcal V_t^{\theta}$$ for non-positive $\theta$. Then by Remark \ref{functorblocks} for some $k\geq 1$
   $$DS_{t|0}F_{a_k}\dots F_{a_q}E_{b_1}\dots E_{b_q}\mathbf 1=0$$ and hence  $$DS_{t|0}F_{a_1}\dots F_{a_q}E_{b_1}\dots E_{b_q}  \mathbf 1=0.$$ But then
   $DS_{t|0}(T_\lambda)=0$ which implies $\dim T(\lambda)=0$.

   From combinatorial description of $K(\lambda,\mu)$ given in \cite{E} we see that if in $f_\lambda$ there is $\circ$ to the left of some $\times$
   then $K(\lambda,\mu)=1$ for at least one $\mu\neq \lambda$. If the core diagram is fixed then the re is exactly one diagram such that all $\times$-s lie to
   the left of all $\circ$-s. That implies uniqueness of $\lambda$ in every block. We can also characterize $\lambda$ as the minimal weight in the block.
   \end{proof}

\end{document}